\newtheorem{theorem}{Theorem}[section]
\newtheorem{proposition}[theorem]{Proposition}
\newtheorem{lemma}[theorem]{Lemma}
\newtheorem{definition}[theorem]{Definition}
\newtheorem{corollary}[theorem]{Corollary}
\theoremstyle{definition}
\newtheorem*{question}{Question}
\newtheorem{example}[theorem]{Example}
\theoremstyle{remark}
\newtheorem*{remark}{Remark}
\numberwithin{equation}{section}
\newcommand{\mbf}{\mathbf}
\newcommand{\mcal}{\mathcal}
\newcommand{\set}[1]{\left\{ #1 \right\}}
\newcommand{\C}{\mathbb{C}}
\newcommand{\R}{\mathbb{R}}
\newcommand{\Z}{\mathbb{Z}}
\newcommand{\N}{\mathbb{N}}
\newcommand{\PP}{\mathbb{P}}
\newcommand{\f}{\infty}
\newcommand{\la}{\langle}
\newcommand{\ra}{\rangle}
\newcommand{\wh}[1]{\widehat{#1}}
\newcommand{\wt}[1]{\widetilde{#1}}
\newcommand{\ep}{\varepsilon}
\newcommand{\sse}{\subseteq}
\newcommand{\sm}{\setminus}
\newcommand{\D}{\;\mathrm{d}}
\newcommand{\blue}[1]{{\color{blue}#1}}
\title[Spectrality of Random Convolutions]{Spectrality of Random Convolutions Generated by Finitely Many Hadamard Triples}
\author[W. Li]{Wenxia Li}
\address[W. Li]{School of Mathematical Sciences, Key Laboratory of MEA (Ministry of Education) \& Shanghai Key Laboratory of PMMP, East China Normal University, Shanghai 200241, People's Republic of China}
\email{wxli@math.ecnu.edu.cn}
\author[J. J. Miao]{Jun Jie Miao}
\address[J. J. Miao]{School of Mathematical Sciences, Key Laboratory of MEA (Ministry of Education) \& Shanghai Key Laboratory of PMMP, East China Normal University, Shanghai 200241, People's Republic of China}
\email{jjmiao@math.ecnu.edu.cn}
\author[Z. Wang]{Zhiqiang Wang*}
\address[Z. Wang]{School of Mathematical Sciences, Key Laboratory of MEA (Ministry of Education) \& Shanghai Key Laboratory of PMMP, East China Normal University, Shanghai 200241,
People's Republic of China}
\email{zhiqiangwzy@163.com}
\subjclass[2020]{28A80, 42C30}
\thanks{* Corresponding author}
\begin{document}

\begin{abstract}
Let $\{(N_j, B_j, L_j): 1 \le j \le m\}$ be finitely many Hadamard triples in $\mathbb{R}$.
Given a sequence of positive integers $\{n_k\}_{k=1}^\infty$ and $\omega=(\omega_k)_{k=1}^\infty \in \{1,2,\cdots, m\}^\mathbb{N}$, let $\mu_{\omega,\{n_k\}}$ be the infinite convolution given by
$$\mu_{\omega,\{n_k\}} = \delta_{N_{\omega_1}^{-n_1} B_{\omega_1}} * \delta_{N_{\omega_1}^{-n_1} N_{\omega_2}^{-n_2} B_{\omega_2}} * \cdots * \delta_{N_{\omega_1}^{-n_1} N_{\omega_2}^{-n_2} \cdots N_{\omega_k}^{-n_k} B_{\omega_k} }* \cdots. $$
In order to study the spectrality of $\mu_{\omega,\{ n_k\}}$, we first show the spectrality of general infinite convolutions generated by Hadamard triples under the equi-positivity condition.
Then by using the integral periodic zero set of Fourier transform we show that if $\mathrm{gcd}(B_j - B_j)=1$ for $1 \le j \le m$, then all infinite convolutions $\mu_{\omega,\{n_k\}}$ are spectral measures.
This implies that we may find a subset $\Lambda_{\omega,\{n_k\}}\subseteq \mathbb{R}$ such that $\big\{ e_\lambda(x) = e^{2\pi i \lambda x}: \lambda \in \Lambda_{\omega,\{n_k\}} \big\}$ forms an orthonormal basis for $L^2(\mu_{\omega,\{ n_k\}})$.
\end{abstract}

\keywords{spectral measure, infinite convolution, orthonormal basis, equi-positivity}

\maketitle

\section{Introduction}

Let $\mcal{P}(\R^d)$ denote the set of all Borel probability measures on $\R^d$.
We call $\mu \in \mcal{P}(\R^d)$ a {\it spectral measure} if there exists a countable subset $\Lambda \sse \R^d$ such that the family of exponential functions
$$\set{ e_\lambda(x) = e^{2\pi i \lambda \cdot x}: \lambda \in \Lambda}
$$
forms an orthonormal basis in $L^2(\mu)$. The set $\Lambda$ is called a \textit{spectrum} of $\mu$, and we say that $(\mu,\Lambda)$ is a \textit{spectral pair}.
The existence of spectrum of $\mu$ is a basic question in harmonic analysis, and it may date back to Fuglede's seminal paper \cite{Fuglede-1974}. Fuglede conjectured that if $\Gamma \sse \R^d$ is a measurable subset with positive finite Lebesgue measure, the normalized Lebesuge measure on $\Gamma$ is a spectral measure if and only if $\Gamma$ tiles $\R^d$ by translations. Tao \cite{Tao-2004} and the others \cite{Farkas-Matolcsi-Mora-2006,Farkas-Revesz-2006,Kolountzakis-Matolcsi-2006a,Kolountzakis-Matolcsi-2006b,Matolcsi-2005}
have disproved Fuglede's conjecture in both directions for $d\ge 3$. However, the connection between spectrality and tiling still attracts considerable attentions, and some interesting positive results have been proved for special cases \cite{Iosevich-Katz-Tao-2003,Laba-2001,Lev-Matolcsi-2019}.

In 1998, Jorgensen and Pedersen \cite{Jorgensen-Pedersen-1998} showed that  $\mu_{4,\set{0,2}}$ is a spectral measure with a spectrum $\Lambda$,
where $\mu_{4,\set{0,2}}$ is the self-similar measure with equal weights generated by the iterated function system (IFS) $$\set{ f_1(x) =\frac{x}{4}, f_2(x) = \frac{x+2}{4} },$$ and the set $\Lambda$ is given by
\begin{equation}\label{lambda}
  \Lambda = \bigcup_{n=1}^\f \set{\ell_1 + 4\ell_2 + \cdots + 4^{n-1} \ell_n: \ell_1,\ell_2,\cdots,\ell_n \in \set{0,1}},
\end{equation}
but the standard middle-third Cantor measure is not a spectral measure.
Note that the fractal measure $\mu_{4,\set{0,2}}$ and the Lebesgue measure are mutually singular.
We refer the readers to \cite{Falco03} for the details of fractal sets and measures. This aroused the research interest of spectrality of fractal measures. From then on, there are abundant literatures on this topic \cite{An-Fu-Lai-2019,An-He-He-2019,An-He-2014,An-He-Lau-2015,An-He-Li-2015,An-Wang-2021,Dai-He-Lau-2014, Deng-Chen-2021,Dutkay-Han-Sun-2014,Dutkay-Haussermann-Lai-2019,Dutkay-Jorgensen-2007,Dutkay-Jorgensen-2012, Dutkay-Lai-2014,Dutkay-Lai-2017,Fu-Wen-2017,He-Tang-Wu-2019,Jorgensen-Pedersen-1998,Laba-Wang-2002, Strichartz-2000,Strichartz-2006,Wang-Dong-Liu-2018}.

%Given an iterated function system (IFS) $$\set{ f_1(x) =\frac{x}{4}, f_2(x) = \frac{x+2}{4} }$$ and a set
%\begin{equation}\label{lambda}
%  \Lambda = \bigcup_{n=1}^\f \set{\ell_1 + 4\ell_2 + \cdots + 4^{n-1} \ell_n: \ell_1,\ell_2,\cdots,\ell_n \in \set{0,1}},
%\end{equation}
%let  $\mu_{4,\set{0,2}}$ be the self-similar measure with equal probabilities. Note that fractal measures and Lebesgue measures are often mutually singular, we refer the readers to \cite{Falco03} for the details of fractal sets and measures. In 1998, Jorgenden and Perdersen \cite{Jorgensen-Pedersen-1998} showed that  $\mu_{4,\set{0,2}}$ is a spectral measure with a spectrum $\Lambda$, but the standard middle-third Cantor measure is not spectral. This aroused the research interest of spectrality of fractal measures. From then on, there are abundant literatures on this topic \cite{An-Fu-Lai-2019,An-He-He-2019,An-He-2014,An-He-Lau-2015,An-He-Li-2015,An-Wang-2021,Dai-He-Lau-2014, Deng-Chen-2021,Dutkay-Han-Sun-2014,Dutkay-Haussermann-Lai-2019,Dutkay-Jorgensen-2007,Dutkay-Jorgensen-2012, Dutkay-Lai-2014,Dutkay-Lai-2017,Fu-Wen-2017,He-Tang-Wu-2019,Jorgensen-Pedersen-1998,Laba-Wang-2002, Strichartz-2000,Strichartz-2006,Wang-Dong-Liu-2018}.

There are many surprising phenomena for singular continuous spectral measures. In \cite{Dutkay-Jorgensen-2012}, Dutkay and Jorgensen showed that besides the set $\Lambda$ defined in (\ref{lambda}), the sets  $5\Lambda$, $7\Lambda$, $11\Lambda$, $13\Lambda$, $17\Lambda$, $\cdots$ are all spectra of $\mu_{4,\set{0,2}}$.
Moreover, the convergence of mock Fourier series is distinct with respect to different spectra.
In \cite{Strichartz-2006}, Strichartz proved that the mock Fourier series of continuous functions converges uniformly with respect to the spectral measure $\mu_{4,\set{0,2}}$ with the spectrum $\Lambda$, but Dutkey, Han and Sun \cite{Dutkay-Han-Sun-2014} showed that there exists a continuous function such that its mock Fourier series is divergent at $0$ with respect to the spectrum $17\Lambda$.

To study fractal spectral measures, the Hadamard triple is a fundamental tool. We write $\#$ for the cardinality of a set. Let $R\in M_d(\Z)$ be a $d\times d$ expanding matrix (i.e. all eigenvalues have modulus strictly greater than $1$) with integral entries. Let $B,L\subseteq \Z^d$ be two finite subsets of integral vectors with $N=\#B=\#L \ge 2$. If the matrix
 $$\left[ \frac{1}{\sqrt{N}} e^{-2\pi i (R^{-1}b) \cdot \ell} \right]_{b\in B,\ell \in L}$$
 is unitary, we call $(R, B, L)$ a {\it Hadamard triple} in $\R^d$ . We write $\delta_a$ for the Dirac measure at the point $a \in \R^d$, and for a finite subset $A\sse \R^d$, we write
 $$\delta_{A} = \frac{1}{\# A} \sum_{a \in A} \delta_a.$$
It is clear that $(R, B, L)$ is a Hadamard triple if and only if the set $L$ is a spectrum of the discrete measure $\delta_{R^{-1}B}$,

To construct more flexible examples of fractal spectral measures, different Hadamard triples are involved in infinite convolution.
%different Hadamard triples are applied to the discrete measures in infinite convolution.
The following question is largely investigated.
\begin{question}
  Given a sequence of Hadamard triples $\set{(R_j,B_j,L_j): j \ge 1}$ in $\R^d$, under what conditions is the infinite convolution $$\mu=\delta_{R_1^{-1} B_1} * \delta_{(R_2 R_1)^{-1} B_2} * \cdots * \delta_{(R_n \cdots R_2 R_1)^{-1} B_n} * \cdots$$ a spectral measure ?
\end{question}
Many affirmative results have been obtained in \cite{An-Fu-Lai-2019,An-He-He-2019,An-He-2014,An-He-Lau-2015,An-He-Li-2015,Dutkay-Haussermann-Lai-2019,Dutkay-Lai-2017,
Fu-Wen-2017,Laba-Wang-2002,Wang-Dong-Liu-2018}.
%In~\cite{Dutkay-Lai-2017}, Dutkay and Lai proved that if $\mu$ has compact support and satisfies the no-overlap condition, and the infimum of the singular values of $R_i$ is strictly positive,  $\mu$ is a spectral measure. In 2019,  An, Fu and Lai in~\cite{An-Fu-Lai-2019} considered the 1-dimensional case, and showed that if $\mu$ is compactly supported, and the lower limit of the cardinality of $B_n$ is finite, $\mu$ is a spectral measure.
When all Hadamard triples are the same one, the infinite convolution reduces to a self-affine measure (which is called a self-similar measure for $d=1$).
{\L}aba and Wang \cite{Laba-Wang-2002} proved that the self-similar measure with equal weights generated by a Hadamard triple in $\R$ is a spectral measure, and Dutkay, Haussermann and Lai \cite{Dutkay-Haussermann-Lai-2019} generalized it to self-affine measures in higher dimension.

In this paper, we explore the spectrality of the random convolutions generated by finitely many Hadamard triples in $\R$.
Let $\set{(N_j, B_j, L_j): 1 \le j \le m}$ be finitely many Hadamard triples in $\R$.
Let $\Omega$ be the symbolic space over the alphabet $\set{1,2,\cdots, m}$.
Given  a sequence of positive integers $\{n_k\}_{k=1}^\f$ and $\omega=(\omega_k)_{k=1}^\f \in \Omega$, let $\mu_{\omega,\set{n_k}}$ be the random convolution given by
\begin{equation}\label{mu-subsequence}
  \mu_{\omega,\set{n_k}} = \delta_{N_{\omega_1}^{-n_1} B_{\omega_1}} * \delta_{N_{\omega_1}^{-n_1} N_{\omega_2}^{-n_2} B_{\omega_2}} * \cdots * \delta_{N_{\omega_1}^{-n_1} N_{\omega_2}^{-n_2} \cdots N_{\omega_k}^{-n_k} B_{\omega_k} }* \cdots ,
\end{equation}
where $\omega_k$ determines the Hadamard triple chosen in $k$-th convolution.
When $n_k =1$ for all $k \ge 1$, we write
\begin{equation}\label{mu-omega}
  \mu_\omega = \delta_{N_{\omega_1}^{-1} B_{\omega_1}} * \delta_{(N_{\omega_1} N_{\omega_2})^{-1} B_{\omega_2}} * \cdots * \delta_{(N_{\omega_1} N_{\omega_2}\cdots N_{\omega_k})^{-1} B_{\omega_k} }* \cdots.
\end{equation}

 The random convolution generated by finitely many Hadamard triples in $\R^d$ was first studied by Strichartz in \cite{Strichartz-2000}, where he showed that the random convolution is spectral under certain separation condition. But, in general, the separation condition is not easy to check.
 Dutkay and Lai also considered this question in \cite{Dutkay-Lai-2017}, and they generalized the Strichartz criterion under no-overlap condition and showed that for some special cases, there exists a common spectrum for almost all random convolutions.

In this paper, under a simple condition on the sets $\set{B_j: 1\le j \le m}$ in $\R$, we show that all random convolutions are spectral measures.

\begin{theorem}\label{main-result}
  Let $\set{(N_j, B_j, L_j): 1 \le j \le m}$ be finitely many Hadamard triples in $\R$.
  Suppose that $\gcd(B_j - B_j) =1$ for $1\le j \le m$.
Given a sequence $\set{n_k}_{k=1}^\f$ of positive integers, let $\mu_{\omega,\set{n_k}}$ be given by \eqref{mu-subsequence}. Then $\mu_{\omega,\set{n_k}}$ is a spectral measure for every $\omega \in \Omega$.
\end{theorem}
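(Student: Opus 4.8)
The plan is to write down an explicit candidate spectrum, verify orthogonality directly from the Hadamard condition, and prove completeness via the equi-positivity method; essentially all the weight of the hypothesis $\gcd(B_j-B_j)=1$ will fall on the last step. After a translation we may assume $0\in B_j$ and $0\in L_j$ for every $j$ (this affects neither the Hadamard property nor spectrality). Set $q_0=1$ and $q_k=N_{\omega_1}^{n_1}\cdots N_{\omega_k}^{n_k}$, and for a finite integral set $B$ write $\wh{\delta_B}(\xi)=\frac{1}{\#B}\sum_{b\in B}e^{-2\pi i b\xi}$, so that
\[
\wh{\mu}_{\omega,\{n_k\}}(\xi)=\prod_{k=1}^{\f}\wh{\delta_{B_{\omega_k}}}\big(q_k^{-1}\xi\big).
\]
Since each $N_{\omega_k}\ge 2$ is expanding, $q_k\to\f$ and the supports shrink geometrically, so this product converges and $\mu_{\omega,\{n_k\}}$ is a well-defined probability measure. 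As $(N_{\omega_k},B_{\omega_k},L_{\omega_k})$ is a Hadamard triple, $\wh{\delta_{B_{\omega_k}}}$ vanishes on $N_{\omega_k}^{-1}(L_{\omega_k}-L_{\omega_k})\sm\{0\}$, and one checks that $q_{k-1}N_{\omega_k}^{n_k-1}L_{\omega_k}$ is a spectrum of the single factor $\delta_{q_k^{-1}B_{\omega_k}}$. This dictates the candidate spectrum
\[
\Lambda=\Big\{\textstyle\sum_{k=1}^{K}q_{k-1}N_{\omega_k}^{n_k-1}\ell_k : K\ge 0,\ \ell_k\in L_{\omega_k}\Big\},
\]
with finite truncations $\Lambda_n$ at level $n$.

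For orthogonality I would take distinct $\lambda,\lambda'\in\Lambda$ and let $k_0$ be the first index at which their digits differ. All higher-order terms of $\lambda-\lambda'$ are divisible by $q_{k_0}$ and all $B_j$ are integral, so each mask is $\Z$-periodic and a short computation gives $q_{k_0}^{-1}(\lambda-\lambda')\equiv N_{\omega_{k_0}}^{-1}(\ell_{k_0}-\ell_{k_0}')\pmod{\Z}$. Hence the $k_0$-th factor of $\wh{\mu}(\lambda-\lambda')$ equals $\wh{\delta_{B_{\omega_{k_0}}}}\big(N_{\omega_{k_0}}^{-1}(\ell_{k_0}-\ell_{k_0}')\big)=0$ by the Hadamard condition, and the whole product vanishes. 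Thus $\{e_\lambda\}_{\lambda\in\Lambda}$ is orthonormal in $L^2(\mu_{\omega,\{n_k\}})$; this step is routine.

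For completeness I would use the Jorgensen--Pedersen criterion \cite{Jorgensen-Pedersen-1998}: the orthonormal family is a basis if and only if $Q_\Lambda(\xi):=\sum_{\lambda\in\Lambda}|\wh{\mu}(\xi+\lambda)|^2\equiv 1$. Bessel's inequality gives $Q_\Lambda\le 1$, so the task is to exclude $Q_\Lambda(\xi_0)<1$. Factoring $\mu=\mu_n*\mu_{>n}$ and using the splitting $\Lambda=\Lambda_n\oplus q_n\Lambda^{(n)}$, where $\Lambda^{(n)}$ is the analogous candidate spectrum for the rescaled tail $\wt{\mu}_{>n}$ (the pushforward of $\mu_{>n}$ under $x\mapsto q_n x$), together with the fact that $\Lambda_n$ is a genuine spectrum of the finite convolution $\mu_n$, I would run the equi-positivity argument of Dutkay--Haussermann--Lai \cite{Dutkay-Haussermann-Lai-2019} as adapted to infinite convolutions in \cite{An-Fu-Lai-2019}: once the family of all rescaled tails $\{\wt{\mu}_{>n}\}$ is shown to be equi-positive, a pullback argument through the digit tree forces $Q_\Lambda\equiv 1$.

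The main obstacle is establishing this equi-positivity, and this is exactly where $\gcd(B_j-B_j)=1$ enters. The key elementary observation is that $\gcd(B_j-B_j)=1$ is equivalent to $|\wh{\delta_{B_j}}(\xi)|=1$ holding \emph{only} at $\xi\in\Z$; that is, each mask has modulus strictly below $1$ off the integer lattice. For a single Hadamard triple this locus can always be normalized back to $\Z$ by rescaling, but for several triples the required rescalings are incompatible, which is precisely why the hypothesis must be imposed rather than arranged. Because the tails $\wt{\mu}_{>n}$ are themselves random convolutions drawn from the \emph{finitely many} triples $\{(N_j,B_j,L_j)\}$, the finiteness supplies uniform constants, while the modulus-one characterization rules out any nontrivial recurrent cycle of mask zeros on which a tail product could degenerate. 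Combining these yields a neighborhood $V$ of the origin and $\ep>0$ with $|\wh{\wt{\mu}}_{>n}(\xi)|\ge\ep$ for all $n$ and all $\xi\in V$, uniformly in $\omega$ and $\{n_k\}$. I expect the delicate point to be extracting this bound uniformly over \emph{all} scale sequences: one must isolate the finitely many possible ``first-return'' masks and control the geometrically small remainder so that the gcd-based non-degeneracy is not lost when the exponents $n_k$ are large.
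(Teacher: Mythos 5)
Your skeleton (orthogonality from the Hadamard condition, completeness via equi-positivity, with the gcd hypothesis carrying the weight in the last step) matches the paper's strategy, and your observation that $\gcd(B_j-B_j)=1$ is equivalent to $\{|\wh{\delta}_{B_j}|=1\}=\Z$ is correct and is indeed the arithmetic heart of the paper's argument. But the completeness step has a genuine gap. Your key claimed estimate --- a uniform lower bound $|\wh{\nu}_{>n}(\xi)|\ge\ep$ for $\xi$ in a fixed neighborhood $V$ of the origin --- is both too weak and too cheap. Too cheap: since the rescaled tails of any random convolution over finitely many Hadamard triples are supported in a common compact set, tightness plus equicontinuity of $\{\wh{\nu}_{>n}\}$ and $\wh{\nu}_{>n}(0)=1$ already give this bound with \emph{no} gcd hypothesis at all. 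Too weak: the paper's Example 1.5 ($B_1=\{0,1\}$, $B_2=\{0,3\}$, $N_1=N_2=2$) satisfies this near-origin bound, yet $\mu_\eta$ is not spectral. So positivity of the tails near $0$ cannot force $Q_\Lambda\equiv 1$. What is actually needed is the full equi-positivity of Definition 1.4: for \emph{every} $x\in[0,1)$ there must exist an integer shift $k_{x,\mu}$ with $|\wh{\mu}(x+y+k_{x,\mu})|\ge\ep$ for $|y|<\delta$; your bound is only the case $x=0$. Establishing the shifts for all $x$ is where the gcd condition genuinely enters, and it is nontrivial: the paper proves that the integral periodic zero set $\mcal{Z}(\mu_\omega)$ is empty (Proposition 4.2, via a pullback of periodic zeros through the maps $\tau_{\ell,j}$, a cardinality-growth argument, and the finiteness Lemma 4.1), shows the family $\{\mu_\omega\}$ is weakly closed (Proposition 4.3), and converts admissibility into equi-positivity (Theorem 3.5). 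Your ``no recurrent cycle of mask zeros'' remark gestures at Proposition 4.2 but is not an argument.

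A second, related gap: because genuine equi-positivity involves integer shifts, the spectrum cannot be the naive set you fixed in advance. In the paper's Theorem 3.2 the spectrum is built \emph{adaptively}: at each stage one inserts correction terms $k_{\lambda,i}\cdot\mbf{N}_{m_{i-1},m_i}$ (equation \eqref{defLam}) whose existence is exactly what equi-positivity supplies, and completeness is then proved by dominated convergence against the counting measure. With your uncorrected $\Lambda$, orthogonality holds (your computation is fine) but $Q_\Lambda\equiv 1$ can fail. Finally, the uniformity over all scale sequences $\{n_k\}$, which you correctly flag as the delicate point but leave unresolved, is handled in the paper by a short convolution-factor trick: setting $\eta=\omega_1^{n_1}\omega_2^{n_2}\cdots$, each tail $\nu_{>k}$ of $\mu_{\omega,\{n_k\}}$ is a convolution factor of $\mu_{\sigma^{n_1+\cdots+n_k}(\eta)}$, whence $|\wh{\nu}_{>k}(\xi)|\ge|\wh{\mu}_{\sigma^{n_1+\cdots+n_k}(\eta)}(\xi)|$, so equi-positivity of the fixed family $\{\mu_\omega:\omega\in\Omega\}$ transfers to the tails for every choice of $\{n_k\}$. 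Without these three ingredients your outline does not close.
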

\begin{remark}
  (i) Since this manuscript was completed, the referee mentioned the independent work of Lu, Dong and Zhang \cite{Lu-Dong-Zhang-2022}, where they also obtained this result in a slight different form.
  In order to prove Theorem \ref{main-result}, we establish a general result (see Theorem \ref{general-result}), which can be applied to construct singular continuous spectral measures without compact support in the later work.

  (ii) An, Fu and Lai \cite{An-Fu-Lai-2019} studied the spectrality of infinite convolutions generated by a sequence of Hadamard triples $\{(N_n,B_n,L_n): n \ge 1\}$ in $\R$ under the restriction that $N_n \ge 2$ and $B_n \subseteq \{0,1,\cdots, N_n-1\}$. By analyzing equi-positivity and admissibility of probability measures, they proved that if $\liminf_{n\to \f} \# B_n < \f$, then the resulting infinite convolution is a spectral measure.
  Since they assumed $B_n \subseteq \{0,1,\cdots, N_n-1\}$, the infinite convolutions are all supported in $[0,1]$.
  They also pointed out that the admissibility of probability measures supported outside $[0,1]$ cannot be easily analyzed in their approach.
  The infinite convolutions we considered in Theorem \ref{main-result} may be supported outside $[0,1]$.
  In order to analyze the admissibility we generalize the argument in \cite[Theorem 5.4]{Dutkay-Haussermann-Lai-2019} for self-similar measures to more general infinite convolutions.
\end{remark}

As a special case of Theorem \ref{main-result}, we have the following corollary immediately.
%The following corollary is an immediate consequence of the above theorem.
\begin{corollary}\label{main-cor}
  Let $\set{(N_j, B_j, L_j): 1 \le j \le m}$ be finitely many Hadamard triples in $\R$.
  Suppose that $\gcd(B_j - B_j) =1$ for $1\le j \le m$.
Let $\mu_{\omega}$ be given by \eqref{mu-omega}. Then $\mu_{\omega}$ is a spectral measure for every $\omega \in \Omega$.
\end{corollary}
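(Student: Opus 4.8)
The plan is to obtain Corollary \ref{main-cor} as a direct specialization of Theorem \ref{main-result}, since the measure $\mu_\omega$ defined in \eqref{mu-omega} is nothing but $\mu_{\omega,\set{n_k}}$ from \eqref{mu-subsequence} for the constant exponent sequence. The hypotheses of the two statements are identical: both concern the same finitely many Hadamard triples $\set{(N_j,B_j,L_j): 1 \le j \le m}$ in $\R$ and impose the same condition $\gcd(B_j - B_j) = 1$ for $1 \le j \le m$. Thus the only thing to verify is that the two measures coincide under the appropriate choice of $\set{n_k}$, after which the conclusion is immediate.

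First I would fix $\omega \in \Omega$ and take the sequence $n_k = 1$ for every $k \ge 1$. Comparing the two definitions factor by factor, the $k$-th Dirac factor in \eqref{mu-subsequence} is
$$\delta_{N_{\omega_1}^{-n_1} N_{\omega_2}^{-n_2} \cdots N_{\omega_k}^{-n_k} B_{\omega_k}},$$
which under $n_1 = n_2 = \cdots = n_k = 1$ becomes
$$\delta_{N_{\omega_1}^{-1} N_{\omega_2}^{-1} \cdots N_{\omega_k}^{-1} B_{\omega_k}} = \delta_{(N_{\omega_1} N_{\omega_2} \cdots N_{\omega_k})^{-1} B_{\omega_k}},$$
which is precisely the $k$-th factor in \eqref{mu-omega}. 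Hence $\mu_{\omega,\set{n_k}} = \mu_\omega$ as measures for this particular choice of $\set{n_k}$. Applying Theorem \ref{main-result}, which guarantees that $\mu_{\omega,\set{n_k}}$ is a spectral measure for every $\omega \in \Omega$ and every sequence $\set{n_k}$ of positive integers, we conclude that $\mu_\omega$ is a spectral measure for every $\omega \in \Omega$. There is no genuine obstacle in this deduction; the entire content resides in Theorem \ref{main-result}, and the passage to the corollary amounts only to observing that the two displayed infinite convolutions agree when all exponents are equal to $1$.
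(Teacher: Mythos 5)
Your proposal is correct and matches the paper exactly: the paper states the corollary as an immediate consequence of Theorem \ref{main-result}, obtained precisely by taking $n_k = 1$ for all $k \ge 1$ in \eqref{mu-subsequence}, which is the specialization you verify. Your factor-by-factor check that $\mu_{\omega,\set{n_k}} = \mu_\omega$ under this choice is the only content needed, and it is sound.
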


\blue{

}

To prove Theorem \ref{main-result}, we need to consider the infinite convolution generated by a sequence of Hadamard triples $\set{(N_n,B_n,L_n): n \ge 1}$ in $\R$.
We write
\begin{equation}\label{mu-n}
  \mu_n= \delta_{N_{1}^{-1} B_1} * \delta_{(N_1 N_2)^{-1} B_2} * \cdots * \delta_{(N_1 N_2 \cdots N_n)^{-1} B_n }.
\end{equation}
\emph{We always assume that $\mu_n$ converges weakly to a Borel probability measure $\mu$.}
It is known that if
\begin{equation}\label{condition-weak-converge}
  \sum_{n=1}^{\f} \frac{\max\set{|b|: b \in B_n}}{|N_1 N_2 \cdots N_n|} < \f,
\end{equation}
then $\mu_n$ converges weakly to a Borel probability measure $\mu$, and moreover, the measure $\mu$ has a compact support.
Noting that there are only finitely many Hadamard triples in Theorem~\ref{main-result}, it is obvious that  the condition (\ref{condition-weak-converge}) is satisfied.

The weak limit measure $\mu$ may be written as an infinite convolution
\begin{equation}\label{mu-infinite-convolution}
\begin{split}
  \mu & = \delta_{N_{1}^{-1} B_1} * \delta_{(N_1 N_2)^{-1} B_2} * \cdots * \delta_{(N_1 N_2 \cdots N_n)^{-1} B_n } * \cdots \\
  & = \mu_n * \mu_{>n}.
\end{split}
\end{equation}
We scale the measure $\mu_{>n}$ and define
\begin{equation}\label{nu-large-than-n}
  \nu_{>n}(\;\cdot\;) = \mu_{>n}\left( \frac{1}{N_1 N_2 \cdots N_n} \; \cdot\; \right).
\end{equation}
Obviously, the spectrality of $\mu$ is  affected by the property of the sequence $\{\nu_{>n}\}$.
The equi-positivity condition plays an important role in the study of spectrality.
\begin{definition}\label{def-equipositive}
  We call $\Phi \sse \mcal{P}(\R^d)$ an equi-positive family if there exist $\ep>0$ and $\delta>0$ such that for  $x\in [0,1)^d$ and $\mu\in \Phi$, there exists an integral vector $k_{x,\mu} \in \Z^d$ such that
  $$ |\wh{\mu}(x+y+k_{x,\mu})| \ge \ep,$$
  for all $ |y| <\delta,$ where $k_{x,\mu} ={\bf 0}$ for $x={\bf 0}$.
\end{definition}
The equi-positivity condition was first used in \cite{Dutkay-Haussermann-Lai-2019} for the spectrality of self-affine measures.
Our definition is more general than the one in~\cite{An-Fu-Lai-2019} since we do not assume that
all supports of probability measures in $\Phi$ are contained in a common compact subset.
%the family $\Phi \sse \mcal{P}(K)$ for some compact subset $K\sse \R^d$.
The two definitions are equivalent for the tight family of probability measures, see Section~\ref{sec_pre} for the definition of tightness.

The following theorem is a generalization of Theorem 1.2 in~\cite{Dutkay-Lai-2017} and Theorem 3.2 in~\cite{ An-Fu-Lai-2019} for $d= 1$, and it is key to prove Theorem \ref{main-result}.

\begin{theorem}\label{general-result}
  Let $\set{(N_n, B_n , L_n): n \ge 1}$ be a sequence of Hadamard triples in $\R$. Let the probability measure $\mu$ be the weak limit of $\mu_n$ given by \eqref{mu-n}.
  If there exists a subsequence $\{ n_j \}$ of positive integers such that the family $\{ \nu_{>n_j} \}$ is equi-positive, then $\mu$ is a spectral measure with a spectrum in $\Z$.
\end{theorem}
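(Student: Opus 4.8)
The plan is to construct an explicit orthonormal family of exponentials for $\mu$ and to prove its completeness through the Jorgensen--Pedersen criterion \cite{Jorgensen-Pedersen-1998}, using equi-positivity to control the tail. First I would normalize the data: since multiplying a row of the Hadamard matrix by a unimodular constant preserves unitarity, I may assume $0\in B_n$ and $0\in L_n\sse\{0,1,\dots,N_n-1\}$ for every $n$. Writing $c_n=N_1N_2\cdots N_n$ with $c_0=1$, the sets
\[
\Lambda_n=L_1+c_1 L_2+\cdots+c_{n-1}L_n\sse\{0,1,\dots,c_n-1\}
\]
are then nested, contain $0$, and each $\Lambda_n$ is a spectrum of the finite convolution $\mu_n$ (a standard consequence of the Hadamard condition together with the convolution structure); equivalently, the Parseval identity $\sum_{\lambda\in\Lambda_n}|\wh{\mu_n}(\xi+\lambda)|^2=1$ holds for all $\xi$. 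Because $\wh{\mu}=\wh{\mu_n}\cdot\wh{\mu_{>n}}$ and $\wh{\mu_n}(\lambda-\lambda')=0$ for distinct $\lambda,\lambda'\in\Lambda_n$, the family $\{e_\lambda:\lambda\in\bigcup_n\Lambda_n\}$ is automatically orthonormal in $L^2(\mu)$. Thus the whole problem is completeness.

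By the Jorgensen--Pedersen criterion it suffices to show that the candidate spectrum $\Lambda$ I build satisfies $Q(\xi):=\sum_{\lambda\in\Lambda}|\wh{\mu}(\xi+\lambda)|^2\equiv 1$. Setting $Q_n(\xi)=\sum_{\lambda\in\Lambda_n}|\wh{\mu}(\xi+\lambda)|^2$, these sums increase to $Q$, so the object to understand is the increment along the equi-positive subsequence $\{n_j\}$. Using $\wh{\mu_{>n}}(\xi)=\wh{\nu_{>n}}(\xi/c_n)$, the $c_n\Z$-periodicity of $\wh{\mu_n}$ (valid because $B_k\sse\Z$), and the fact that the finite ``middle block'' $\tau_j$ between levels $n_j$ and $n_{j+1}$ carries its own spectrum $M_j$ with $\sum_{m\in M_j}|\wh{\tau_j}(s+m)|^2=1$, I would rewrite, with $t_\lambda=(\xi+\lambda)/c_{n_j}$ and $\rho_j$ the remaining scaled tail,
\[
Q_{n_{j+1}}(\xi)-Q_{n_j}(\xi)=\sum_{\lambda\in\Lambda_{n_j}}|\wh{\mu_{n_j}}(\xi+\lambda)|^2\sum_{m\in M_j\sm\{0\}}|\wh{\tau_j}(t_\lambda+m)|^2\,|\wh{\rho_j}(t_\lambda+m)|^2 .
\]

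Equi-positivity enters exactly here. For each active scaled location $t_\lambda$ it furnishes an integer $k$ with $|\wh{\nu_{>n_j}}(t_\lambda+y+k)|\ge\ep$ on a $\delta$-ball; since $\wh{\mu_{n_j}}$ is $c_{n_j}\Z$-periodic, translating the block-extension of $\lambda$ through $c_{n_j}k$ disturbs neither the orthonormality nor the finite spectral property, but repositions the tail factor into the region where it is bounded below by $\ep$. This is the ``admissible tower'' construction: between checkpoints I use the plain digits from the $L_k$, and at each checkpoint I insert these equi-positivity shifts, keeping the tower nested and spectral at every finite level. With the tail factors thereby kept $\ge\ep$ at the active points, the displayed increment is bounded below by $\ep^2\sum_{\lambda}|\wh{\mu_{n_j}}(\xi+\lambda)|^2\big(1-|\wh{\tau_j}(t_\lambda)|^2\big)$. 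The goal is then to convert this into a uniform contraction $1-Q_{n_{j+1}}(\xi)\le(1-\kappa)\,(1-Q_{n_j}(\xi))$ with $\kappa=\kappa(\ep,\delta)>0$ independent of $j$ and $\xi$, after which $Q_n\uparrow Q$ together with $\sum_j\big(Q_{n_{j+1}}-Q_{n_j}\big)\le 1$ forces $Q\equiv 1$ and the theorem follows.

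The hard part will be exactly this contraction. A naive term-by-term comparison fails: a term for which the middle-block factor $|\wh{\tau_j}(t_\lambda)|$ is close to $1$ while the tail factor is small contributes heavily to the defect $1-Q_{n_j}(\xi)=\sum_\lambda|\wh{\mu_{n_j}}(\xi+\lambda)|^2\big(1-|\wh{\tau_j}(t_\lambda)|^2|\wh{\rho_j}(t_\lambda)|^2\big)$ but almost nothing to the lower bound for the increment. Overcoming this is where the uniformity built into equi-positivity---the same $\ep,\delta$ for every $x\in[0,1)$ and every member of the family, which for the tight family $\{\nu_{>n_j}\}$ may be taken in the compact-support form---must be exploited to perform the repositioning simultaneously for all defect-carrying $\lambda$, so that the points actually charged by the defect are precisely the ones driven into the set where $|\wh{\nu_{>n_j}}|\ge\ep$. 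Extracting a single contraction constant $\kappa$ from this, rather than the bookkeeping of the tower, is the step I expect to be genuinely delicate, and it is exactly the content that the equi-positivity hypothesis is designed to supply; this parallels and extends the arguments of \cite{Dutkay-Haussermann-Lai-2019,Dutkay-Lai-2017,An-Fu-Lai-2019}.
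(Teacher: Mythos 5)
Your candidate spectrum and the first half of your argument coincide with the paper's: the digit tower built from the composite Hadamard triples, with equi-positivity integers inserted as shifts by multiples of $c_{m_i}=N_1\cdots N_{m_i}$ at checkpoints chosen sparsely along $\set{n_j}$ so that the perturbations stay inside the $\delta$-window, and the bound $Q(\xi)\le 1$ giving orthonormality. The genuine gap is completeness. You propose the one-step contraction $1-Q_{n_{j+1}}(\xi)\le(1-\kappa)\bigl(1-Q_{n_j}(\xi)\bigr)$, and the obstruction you describe is not merely delicate but fatal to this mechanism: by the middle-block Parseval identity the increment obeys
\begin{equation*}
Q_{n_{j+1}}(\xi)-Q_{n_j}(\xi)\;\le\;\sum_{\lambda\in\Lambda_{n_j}}\bigl|\wh{\mu}_{n_j}(\xi+\lambda)\bigr|^2\Bigl(1-\bigl|\wh{\tau}_j(t_\lambda)\bigr|^2\Bigr),
\end{equation*}
whereas the defect $1-Q_{n_j}(\xi)$ charges each $\lambda$ with $1-|\wh{\tau}_j(t_\lambda)|^2|\wh{\rho}_j(\cdot)|^2$. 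Losses caused by the tail beyond the next checkpoint are invisible to the $j$-th increment: where $|\wh{\tau}_j(t_\lambda)|$ is close to $1$ but $|\wh{\rho}_j|$ is small, the defect term is of order one while the increment term is negligible, and no choice of the equi-positivity shifts changes this, because the shifts act inside arguments of functions bounded by $1$. Equi-positivity only supplies a lower bound $|\wh{\nu}_{>n_j}|\ge\ep$ at shifted points, i.e.\ an upper bound $1-\ep^2$ on each defect term; it never compares increment with defect, so no uniform $\kappa$ can be extracted from the hypotheses. Your proof stops exactly where the theorem starts.

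The paper closes this gap with a dominated convergence argument that uses your key inequality, but in the opposite direction: not to recover mass step by step, rather to dominate the finite-level Parseval sums by the limit function. Fix $\xi$, let $\Lambda=\bigcup_i\Lambda_i$, and on $\Lambda$ set $f(\lambda)=|\wh{\mu}(\xi+\lambda)|^2$, and $f_i(\lambda)=|\wh{\mu}_{m_i}(\xi+\lambda)|^2$ for $\lambda\in\Lambda_i$, $f_i(\lambda)=0$ otherwise. Finite-level Parseval gives $\sum_{\lambda}f_i(\lambda)=1$ for every $i$, and $f_i\to f$ pointwise since $\wh{\mu}_{m_i}\to\wh{\mu}$ pointwise. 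For $i$ large enough (so that, for $\lambda=\lambda_1+c_{m_{i-1}}\lambda_2+c_{m_i}k_{\lambda_2,i}$ with $\lambda_1\in\Lambda_{i-1}$, one has $|(\xi+\lambda_1)/c_{m_i}|<\delta$), the choice of $k_{\lambda_2,i}$ gives $\bigl|\wh{\nu}_{>m_i}\bigl((\xi+\lambda)/c_{m_i}\bigr)\bigr|\ge\ep$, hence
\begin{equation*}
f(\lambda)\;=\;f_i(\lambda)\,\Bigl|\wh{\nu}_{>m_i}\Bigl(\tfrac{\xi+\lambda}{c_{m_i}}\Bigr)\Bigr|^2\;\ge\;\ep^2 f_i(\lambda),
\end{equation*}
i.e.\ $f_i\le\ep^{-2}f$, and $f$ is summable because $Q(\xi)\le1$. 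Dominated convergence for the counting measure on $\Lambda$ then transfers the constancy of the finite-level sums to the limit: $Q(\xi)=\sum_\lambda f(\lambda)=\lim_{i\to\f}\sum_\lambda f_i(\lambda)=1$. No recovery-rate estimate between consecutive levels is needed, and this limit argument is the missing idea your proposal lacks.
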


Recently, Deng and Li \cite{Deng-Li-2023} showed that a class of Moran-type Bernoulli convolutions not generated by a sequence of Hadamard triples can also be spectral measures.
This implies that any assumption in Theorem \ref{main-result} is not necessary for spectrality.
At the end, we give an example to show that our assumption  $\gcd(B_j - B_j) =1$ for $1\le j \le m$ in Theorem \ref{main-result} is necessary to conclude that all random convolutions are spectral measures.
\begin{example}
  Let $N_1 = N_2 = 2$, $B_1 = \set{0,1}$, $B_2 = \set{0,3}$, $L_1 = L_2=\set{0,1}$.
  We have that $(N_i, B_i, L_i)$ is a Hadamard triple for $i=1,2$, but $\gcd(B_2 - B_2) =3 \ne 1$.
  Let $\eta = 1 2^\f$, and the random convolution
  $$\mu_\eta = \frac{1}{3} \mathscr{L}|_{[0,1/2]} + \frac{2}{3} \mathscr{L}|_{[1/2,3/2]} + \frac{1}{3} \mathscr{L}|_{[3/2,2]}, $$
  where $\mathscr{L}$ denotes the Lebesgue measure in $\R$.

  It has been showed that an absolutely continuous spectral measure must be uniform on its support \cite{Dutkay-Lai-2014}.
  It is clear that $\mu_\eta$ is not uniformly distributed on its support $[0,2]$.
  Thus, $\mu_\eta$ cannot be a spectral measure.

On the other hand, by Theorem 1.5 in \cite{Dutkay-Lai-2017}, there exists a subset $\Lambda \sse \Z$ such that $\Lambda$ is a spectrum of $\mu_\omega$ for $\PP$-a.e. $\omega \in \Omega = \set{1,2}^\f$, where $\PP$ is the Bernoulli measure with equal weights on $\Omega$.
This implies that $\eta$ is in the exceptional set which is $\PP$-measure zero.
\end{example}

We organize our paper as follows: in Section~\ref{sec_pre}, we recall some definitions and review some already-known results which are essential in our proofs;
in Section~\ref{sec_ea}, we study properties of the equi-positivity and admissibility in $\R^d$, and we prove Theorem \ref{general-result};
we give the proof of Theorem \ref{main-result} in Section~\ref{sec_pf}.

\section{Preliminaries}\label{sec_pre}

First, we give some simple facts about symbolic spaces which are frequently used in our context.
Let $\Omega$ be the symbolic space over the alphabet $\set{1,2,\cdots,m}$, i.e.
$$
\Omega=\set{1,2,\cdots,m}^{\N}.
$$
We topologize the symbolic space $\Omega$ by the metric
$$ d(\omega,\eta) = 2^{-\min\set{k \ge 1: \omega_k \ne \eta_k} }
$$
for distinct $\omega=\omega_1\omega_2\cdots$, $\eta=\eta_1\eta_2\cdots  \in \Omega$. %to make $\Omega$ into a compact metric space.
Then $\Omega$ becomes a compact metric space.
It is well-known that a sequence $\set{\omega(j)}_{j=1}^\f \sse \Omega$ converges to $\omega$ if and only if for each $k \ge 1$, there exists $j_0 \ge 1$ such that for every $j \ge j_0$, $$\omega_1(j) \omega_2(j) \cdots \omega_k(j) = \omega_1 \omega_2 \cdots \omega_k.$$
The left shift on $\Omega$ is denoted by $\sigma$, that is, for $\omega=\omega_1\omega_2\cdots \in \Omega$,
$$\sigma(\omega) = \omega_2 \omega_3 \cdots.$$

For $\mu \in \mcal{P}(\R^d)$, the \emph{support} of $\mu$ is defined by $$\mathrm{spt}(\mu) = \R^d \sm \bigcup \set{ U\sse \R^d: \text{ $U$ is open, and $\mu(U)=0$}},$$
i.e., the smallest closed subset with full measure.
For a compact subset $K \sse \R^d$, let $$\mcal{P}(K) = \set{ \mu\in \mcal{P}(\R^d): \mathrm{spt}(\mu) \sse K }. $$
For $\mu \in \mcal{P}(\R^d)$, the \emph{Fourier transform} of $\mu$ is defined by
$$ \wh{\mu}(\xi) = \int_{\R^d} e^{-2\pi i \xi \cdot x} \D \mu(x). $$
It is easy to verify that $\wh{\mu}$ is uniformly continuous and $\wh{\mu}(0) =1$.

Let $\mu,\mu_1,\mu_2,\cdots \in \mcal{P}(\R^d)$. Recall that  $\mu_n$ \textit{converges weakly} to $\mu$ if $$\lim_{n \to \f} \int_{\R^d} f(x) \D \mu_n(x) = \int_{\R^d} f(x) \D \mu(x),$$
for all $ f \in C_b(\R^d),$ where $C_b(\R^d)$ is the set of all bounded continuous functions on $\R^d$.

Let $\Phi \sse \mcal{P}(\R^d)$. We say that $\Phi$ is {\it tight} (sometimes called uniformly tight) if for each $\ep>0$ there exists a compact subset $K \sse \R^d$ such that $$\inf_{\mu \in \Phi} \mu(K) > 1 - \ep.$$
We refer the readers to \cite{Bil03} for more details about tightness. Given $\Phi \sse \mcal{P}(K)$ for some compact subset $K \sse \R^d$, it is clear that  $\Phi$ is tight.

Next we cite two well-known theorems which are frequently applied to weak limit of probability measures in our context.

\begin{theorem}\label{equivalent-condition-weak-converge}
  Let $\mu,\mu_1,\mu_2,\cdots \in \mcal{P}(\R^d)$. Then $\mu_n$ converges weakly to $\mu$ if and only if $\displaystyle \lim_{n \to \f} \wh{\mu}_n(\xi)=\wh{\mu}(\xi)$ for every $\xi \in \R^d$.
\end{theorem}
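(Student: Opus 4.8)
The plan is to recognize this as the Lévy continuity theorem and to treat the two implications separately, with the forward one essentially immediate and the reverse one resting on a tightness argument combined with Prokhorov's theorem and the uniqueness of the Fourier transform. For the forward direction, suppose $\mu_n$ converges weakly to $\mu$. For each fixed $\xi \in \R^d$ the function $x \mapsto e^{-2\pi i \xi \cdot x}$ has bounded continuous real and imaginary parts, so applying the definition of weak convergence to $\cos(2\pi \xi\cdot x)$ and $\sin(2\pi\xi\cdot x)$ separately yields $\wh{\mu}_n(\xi) = \int e^{-2\pi i \xi \cdot x} \D\mu_n(x) \to \int e^{-2\pi i \xi \cdot x} \D\mu(x) = \wh{\mu}(\xi)$, which is exactly the asserted pointwise convergence.

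For the reverse direction, assume $\wh{\mu}_n(\xi) \to \wh{\mu}(\xi)$ for every $\xi$. The first and principal step is to prove that $\set{\mu_n}$ is tight. I would use the elementary one-dimensional estimate obtained from Fubini's theorem: for a probability measure $\rho$ on $\R$ and $\delta>0$,
$$\frac{1}{2\delta}\int_{-\delta}^{\delta}\bigl(1 - \wh{\rho}(t)\bigr)\D t = \int_\R \left(1 - \frac{\sin(2\pi\delta x)}{2\pi\delta x}\right)\D\rho(x) \ge \frac{1}{2}\,\rho\!\left(\set{|x| \ge \tfrac{1}{\pi\delta}}\right),$$
where the inequality uses $1 - (\sin u)/u \ge \tfrac12$ for $|u| \ge 2$. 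Applying this coordinatewise in $\R^d$, to the push-forward of $\mu_n$ under each coordinate projection (whose one-variable Fourier transform is $t \mapsto \wh{\mu}_n(t e_j)$, with $e_j$ the $j$-th standard basis vector), bounds the mass $\mu_n$ places outside a large centred box by the integrals $\frac{1}{2\delta}\int_{-\delta}^{\delta}(1 - \wh{\mu}_n(te_j))\D t$. Given $\ep>0$, since $\wh{\mu}$ is continuous with $\wh{\mu}(0)=1$, I choose $\delta$ so small that each such integral for the limit $\wh{\mu}$ is below $\ep$; then, because the integrands are uniformly bounded on the compact interval $[-\delta,\delta]$ and converge pointwise, the bounded convergence theorem forces the corresponding integrals for $\wh{\mu}_n$ below $\ep$ for all large $n$. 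The finitely many remaining $\mu_n$ are each individually tight, so enlarging the box accommodates them, giving tightness of the whole family.

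Once tightness is in hand, I would invoke Prokhorov's theorem: every subsequence of $\set{\mu_n}$ admits a further subsequence converging weakly to some $\nu \in \mcal{P}(\R^d)$. By the forward direction already proved, $\wh{\nu}(\xi) = \lim \wh{\mu}_n(\xi) = \wh{\mu}(\xi)$ for all $\xi$, and the uniqueness theorem for Fourier transforms forces $\nu = \mu$. Hence every weakly convergent subsequence has the same limit $\mu$, and a standard subsequence argument upgrades this to $\mu_n \to \mu$ weakly. I expect the tightness step to be the main obstacle: producing the quantitative estimate controlling the tails of $\mu_n$ through the behaviour of $\wh{\mu}_n$ near the origin, and making that control uniform in $n$ via the pointwise convergence hypothesis and bounded convergence. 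The subsequent appeal to Prokhorov's theorem and Fourier uniqueness is then routine.
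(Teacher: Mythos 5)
Your proof is correct, but there is nothing in the paper to compare it against: the paper states this result as one of ``two well-known theorems'' (it is the L\'evy continuity theorem, found e.g.\ in Billingsley's book cited as \cite{Bil03}) and gives no proof at all. Your argument is the standard textbook one, correctly adapted to the paper's $e^{-2\pi i \xi\cdot x}$ convention: the forward implication is immediate from the definition of weak convergence applied to the bounded continuous functions $\cos(2\pi\xi\cdot x)$ and $\sin(2\pi\xi\cdot x)$; for the converse, your truncation inequality is right (the inner Fubini computation gives $\frac{1}{2\delta}\int_{-\delta}^{\delta} e^{-2\pi i tx}\D t = \frac{\sin(2\pi\delta x)}{2\pi\delta x}$, and $1-\frac{\sin u}{u}\ge \frac12$ for $|u|\ge 2$ translates into $|x|\ge \frac{1}{\pi\delta}$, while $1-\frac{\sin u}{u}\ge 0$ everywhere lets you discard the complementary region), the coordinatewise application with a union bound gives tightness, and Prokhorov's theorem together with Fourier uniqueness and the subsequence principle finishes the proof. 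Two cosmetic remarks: the integral $\frac{1}{2\delta}\int_{-\delta}^{\delta}\bigl(1-\wh{\mu}_n(te_j)\bigr)\D t$ is a priori complex, but it is real by the Hermitian symmetry $\wh{\mu}_n(-te_j)=\overline{\wh{\mu}_n(te_j)}$ (or simply take real parts throughout, which is all the inequality uses); and the bounded convergence step yields a bound of, say, $2\ep$ rather than $\ep$ for large $n$ --- both harmless. Note also that your proof exploits the fact that the statement here assumes the pointwise limit is $\wh{\mu}$ for a given probability measure $\mu$, so the delicate half of L\'evy's theorem (deducing that a limit function continuous at $0$ is a characteristic function) is not needed; uniqueness of Fourier transforms is the only extra ingredient, and that is exactly the right economy for the form stated in the paper.
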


\begin{theorem}\label{weak-compactness}
  Let $\Phi \sse \mcal{P}(\R^d)$. Then $\Phi$ is tight if and only if, for every sequence $\{ \mu_n \} \sse \Phi$, there exists a subsequence $\{ \mu_{n_j} \}$ and a Borel probability measure $\mu \in \mcal{P}(\R^d)$ such that $\mu_{n_j}$ converges weakly to $\mu$ as $j \to \f$.
\end{theorem}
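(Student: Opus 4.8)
The statement is Prokhorov's theorem, and it consists of two implications of quite different character; I would treat them separately, disposing of the elementary converse first and then attacking the substantive existence statement.

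\emph{Sequential compactness implies tightness.} I would argue by contraposition. Suppose $\Phi$ is not tight, so there is an $\ep > 0$ such that no compact set captures mass $1-\ep$ uniformly over $\Phi$. Exhausting $\R^d$ by the closed balls $\bar{B}(0,n)$, one may then select $\mu_n \in \Phi$ with $\mu_n(\bar{B}(0,n)) \le 1 - \ep$ for each $n$. By hypothesis some subsequence $\mu_{n_j}$ converges weakly to a measure $\mu \in \mcal{P}(\R^d)$. Fix $R>0$. Since $\set{|x| < R} \sse \bar{B}(0,n_j)$ once $n_j \ge R$, we get $\mu_{n_j}(\set{|x| < R}) \le 1 - \ep$ for all large $j$. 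As $\set{|x| < R}$ is open, the lower semicontinuity of weak limits on open sets (approximate the indicator from below by bounded continuous functions) yields $\mu(\set{|x| < R}) \le \liminf_j \mu_{n_j}(\set{|x| < R}) \le 1 - \ep$. Letting $R \to \f$ and using continuity of $\mu$ from below forces $1 = \mu(\R^d) \le 1 - \ep$, a contradiction. Hence $\Phi$ is tight.

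\emph{Tightness implies sequential compactness.} This is the real content. Given $\set{\mu_n} \sse \Phi$, I would build a weakly convergent subsequence through Helly's selection principle. Attach to each $\mu_n$ its distribution function $F_n(x) = \mu_n\big((-\f,x_1] \times \cdots \times (-\f,x_d]\big)$, enumerate $\Q^d = \set{q_1, q_2, \ldots}$, and by a diagonal argument extract a subsequence $\set{\mu_{n_j}}$ along which $F_{n_j}(q)$ converges for every $q \in \Q^d$; this is possible since all values lie in $[0,1]$. I would then define $F$ on $\R^d$ as the right-continuous envelope of these limiting values, verify that $F$ is nondecreasing in each coordinate, right-continuous, and satisfies the rectangle-additivity needed to induce a nonnegative Borel measure $\mu$ with $\mu((-\f,x]) = F(x)$, and check that $\mu_{n_j} \to \mu$ at every continuity point of $F$ — which upgrades to full weak convergence (for instance by confirming pointwise convergence of the Fourier transforms and applying Theorem~\ref{equivalent-condition-weak-converge}).

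The crux, and the unique place where tightness is indispensable, is confirming that the limit $\mu$ is again a \emph{probability} measure, i.e.\ that no mass escapes to infinity. Given $\ep>0$, tightness supplies a compact set, which I may enlarge to a cube $[-M,M]^d$ with $\mu_n([-M,M]^d) > 1 - \ep$ for every $n$; passing to the limit along the corners (chosen to be continuity points of $F$) gives $\mu([-M,M]^d) \ge 1 - \ep$, whence $\mu(\R^d) \ge 1 - \ep$ for all $\ep$ and thus $\mu(\R^d) = 1$. Without tightness the diagonal procedure still returns a sub-probability limit, but mass could leak to $\pm\f$; tightness is exactly what excludes this. I expect this mass-conservation step to be the main obstacle, together with the bookkeeping that turns pointwise limits of distribution functions into a genuine measure and promotes convergence at continuity points into weak convergence. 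As an alternative to Helly, one can embed $\mcal{P}(\R^d)$ into the unit ball of $C_0(\R^d)^*$ and invoke Banach--Alaoglu with the separability of $C_0(\R^d)$ to get a weak-$*$ convergent subsequence; tightness then enters in precisely the same way, guaranteeing that the weak-$*$ limit has full mass and that convergence against $C_0$ extends to convergence against all of $C_b(\R^d)$.
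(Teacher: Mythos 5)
The paper offers no proof of this statement: it is Prokhorov's theorem, quoted as a classical result with a pointer to Billingsley \cite{Bil03}, so there is no in-paper argument to compare yours against. Judged on its own terms, your outline is the standard textbook proof and is correct in substance. In the converse direction there is one harmless slip: the negation of tightness, with the paper's definition $\inf_{\mu\in\Phi}\mu(K)>1-\ep$, only gives $\inf_{\mu\in\Phi}\mu(\ol{B}(0,n))\le 1-\ep$, and since this infimum need not be attained you cannot literally select $\mu_n$ with $\mu_n(\ol{B}(0,n))\le 1-\ep$; choose instead $\mu_n(\ol{B}(0,n))<1-\ep+1/n$ (or run the argument with $\ep/2$), after which your portmanteau estimate $\mu(\set{|x|<R})\le\liminf_j\mu_{n_j}(\set{|x|<R})\le 1-\ep$ and the limit $R\to\f$ go through unchanged.

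In the forward direction your plan is sound, but note that everything you phrase as ``verify'' and ``check'' is precisely the content of Helly's method in $\R^d$ and would need to be written out in a complete proof: the nonnegativity of the rectangle increments of the right-continuous envelope $F$ (which in $d>1$ is what licenses the Lebesgue--Stieltjes construction, and follows from the same property of the $F_{n_j}$ at rational corners plus right continuity), the convergence $F_{n_j}(x)\to F(x)$ at continuity points of $F$ (by squeezing between rational points above and below), and the choice of $M$ so that the bounding hyperplanes of $[-M,M]^d$ carry no $\mu$-mass, so that the corner inclusion--exclusion passes to the limit. You correctly isolate the one place tightness is indispensable --- preventing escape of mass, so that the Helly (or Banach--Alaoglu) limit is a genuine probability measure --- and your closing remark that without tightness one only obtains a sub-probability limit is exactly right; the alternative via the unit ball of $C_0(\R^d)^*$ is equally valid and arguably cleaner. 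One small caution: invoking Theorem~\ref{equivalent-condition-weak-converge} to upgrade to weak convergence is legitimate only after you know the limit $\mu$ is a probability measure, since that theorem presupposes $\mu\in\mcal{P}(\R^d)$; your ordering of steps respects this, but the dependence deserves explicit mention.
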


A family of continuous functions $\mcal{F} \sse C(\R^d)$ is called \textit{equicontinuous} if for each $\ep>0$ there exists $\delta >0$ such that $|f(x)-f(y)| < \ep$ for all $x,y\in \R^d$ satisfying $|x-y| <\delta$ and all $f\in \mcal{F}$.
The following lemma shows that the Fourier transforms of a tight family of probability measures is equicontinuous.

\begin{lemma}\label{equicontinuous}
  Let $\Phi \sse \mcal{P}(\R^d)$. If $\Phi$ is tight, then the family $\set{\wh{\mu}: \mu\in \Phi}$ is equicontinuous.
\end{lemma}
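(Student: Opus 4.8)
The plan is to estimate the modulus of continuity of $\wh\mu$ directly from the integral definition, and then invoke tightness to make the estimate uniform over $\mu \in \Phi$. First I would write, for any $\xi,\eta \in \R^d$,
$$\wh\mu(\xi) - \wh\mu(\eta) = \int_{\R^d} \left( e^{-2\pi i \xi \cdot x} - e^{-2\pi i \eta \cdot x} \right) \D\mu(x),$$
and, factoring out $e^{-2\pi i \eta \cdot x}$ (which has modulus $1$), deduce
$$\left|\wh\mu(\xi) - \wh\mu(\eta)\right| \le \int_{\R^d} \left| e^{-2\pi i (\xi - \eta) \cdot x} - 1 \right| \D\mu(x).$$
The integrand is controlled by the two elementary inequalities $|e^{i\theta}-1|\le 2$ and $|e^{i\theta}-1|\le|\theta|$, which together give the pointwise bound $\left| e^{-2\pi i (\xi - \eta) \cdot x} - 1 \right| \le \min\set{2,\ 2\pi\,|\xi-\eta|\,|x|}$.

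The key step is then to split the domain of integration into a large compact set and its complement, using tightness to control the tail uniformly. Given $\ep>0$, tightness provides a single compact set $K \sse \R^d$, \emph{independent of} $\mu$, with $\mu(\R^d \sm K) < \ep/4$ for every $\mu \in \Phi$; set $M = \sup_{x\in K}|x| < \f$. On $K$ I bound the integrand by $2\pi\,|\xi-\eta|\,M$, and on $\R^d \sm K$ by the constant $2$, obtaining
$$\left|\wh\mu(\xi) - \wh\mu(\eta)\right| \le 2\pi M\,|\xi-\eta| + 2\,\mu(\R^d \sm K) < 2\pi M\,|\xi-\eta| + \frac{\ep}{2}.$$
Choosing $\delta = \ep/\bigl(4\pi(M+1)\bigr)$ (the $+1$ just avoids dividing by zero in the degenerate case $M=0$) forces $\left|\wh\mu(\xi) - \wh\mu(\eta)\right| < \ep$ whenever $|\xi-\eta| < \delta$. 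Since $K$, $M$, and hence $\delta$ depend only on $\ep$ and not on the particular $\mu$, this $\delta$ works simultaneously for all $\mu \in \Phi$, which is precisely equicontinuity of $\set{\wh\mu : \mu\in\Phi}$.

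There is no genuine obstacle here; the entire content is the uniform choice of $K$. The one place where tightness is indispensable is in keeping the tail term $2\,\mu(\R^d \sm K)$ small \emph{across the whole family} at once: without uniform tightness one could still prove that each individual $\wh\mu$ is (uniformly) continuous, but there would be no common modulus of continuity, since some $\mu \in \Phi$ could place arbitrarily much mass far from the origin. The only care needed beyond this is the harmless bookkeeping around $M=0$, handled by the $M+1$ denominator above.
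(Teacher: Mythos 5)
Your proof is correct and follows essentially the same route as the paper's: both factor out a unimodular exponential to reduce to bounding $\int |e^{-2\pi i(\xi-\eta)\cdot x}-1|\D\mu(x)$, split the integral over a tight compact set $K$ and its complement, bound the tail by $2\,\mu(\R^d\sm K)$, and make the integrand small on $K$. The only (cosmetic) difference is that where the paper chooses $\delta$ abstractly from smallness of $|1-e^{2\pi i y\cdot x}|$ uniformly over $x\in K$, you make this quantitative via the elementary bound $|e^{i\theta}-1|\le|\theta|$ and the explicit constant $M=\sup_{x\in K}|x|$.
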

\begin{proof}
  For each $\ep>0$, since $\Phi$ is tight, there exists a compact subset $K \sse \R^d$ such that $$\inf_{\mu \in \Phi} \mu(K) > 1 - \frac{\ep}{3}.$$
  Then we may find $\delta >0$ such that for all $|y| < \delta$ and all $x\in K$, $$\left| 1- e^{2\pi i y\cdot x} \right| < \frac{\ep}{3}.$$
  Thus, for all $\mu \in \Phi$ and all $\xi_1,\xi_2 \in \R^d$ with $|\xi_1 - \xi_2| <\delta$, we have
  \begin{align*}
    \left| \wh{\mu}(\xi_1) - \wh{\mu}(\xi_2) \right| & = \left| \int_{\R^d} e^{-2\pi i \xi_1 \cdot x} \big(1- e^{ 2 \pi i (\xi_1 - \xi_2) \cdot x} \big) \D \mu(x) \right| \\
    & \le \int_K \left| 1- e^{ 2\pi i (\xi_1 - \xi_2) \cdot x} \right| \D \mu(x) + \int_{\R^d \sm K} \left| 1- e^{2\pi i (\xi_1 - \xi_2) \cdot x}  \right| \D \mu(x) \\
    & \le \frac{\ep}{3} \mu(K) + 2 \mu(\R^d \sm K) < \ep.
  \end{align*}
Therefore, the family $\set{\wh{\mu}: \mu\in \Phi}$ is equicontinuous.
\end{proof}

For $\mu,\nu \in \mcal{P}(\R^d)$, the convolution $\mu *\nu$ is given by   $$ \mu*\nu(B) = \int_{\R^d} \nu(B-x) \D \mu(x)= \int_{\R^d} \mu(B-y) \D \nu(y), $$
for every Borel subset $B\sse \R^d$.
Equivalently, the convolution $\mu *\nu$ is the unique Borel probability measure satisfying $$\int_{\R^d} f(x) \D \mu*\nu(x) = \int_{\R^d \times \R^d} f(x+y) \D \mu \times \nu(x,y),$$
for  all $ f \in C_b(\R^d).$
It is easy to check that $$\wh{\mu *\nu}(\xi) = \wh{\mu}(\xi) \wh{\nu}(\xi).$$
Using Theorem \ref{equivalent-condition-weak-converge}, it is straightforward to obtain the following lemma.

\begin{lemma}\label{convolution-weak-convergence}
  Let $\{\mu_n\}, \{\nu_n\} \sse \mcal{P}(\R^d)$.
  If $\mu_n$ and $\nu_n$ converge weakly to $\mu$ and $\nu$ respectively, then we have $\mu_n * \nu_n$ converges weakly to $\mu*\nu$.
\end{lemma}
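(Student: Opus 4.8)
The plan is to reduce everything to pointwise convergence of Fourier transforms, exactly as the sentence preceding the statement suggests. By Theorem~\ref{equivalent-condition-weak-converge}, the hypotheses that $\mu_n$ converges weakly to $\mu$ and that $\nu_n$ converges weakly to $\nu$ are equivalent to the pointwise statements $\wh{\mu}_n(\xi) \to \wh{\mu}(\xi)$ and $\wh{\nu}_n(\xi) \to \wh{\nu}(\xi)$ for every fixed $\xi \in \R^d$. So I would first translate both hypotheses into this Fourier-analytic form.

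Next I would invoke the multiplicativity of the Fourier transform under convolution, namely $\wh{\mu_n * \nu_n}(\xi) = \wh{\mu}_n(\xi)\,\wh{\nu}_n(\xi)$, recorded just above the statement. For each fixed $\xi$, the right-hand side is a product of two convergent scalar sequences, hence converges to $\wh{\mu}(\xi)\,\wh{\nu}(\xi)$ by the elementary fact that the limit of a product is the product of the limits. Applying the same convolution identity to the limiting measures gives $\wh{\mu}(\xi)\,\wh{\nu}(\xi) = \wh{\mu * \nu}(\xi)$.

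Combining these two observations, I conclude that $\wh{\mu_n * \nu_n}(\xi) \to \wh{\mu * \nu}(\xi)$ for every $\xi \in \R^d$. Applying Theorem~\ref{equivalent-condition-weak-converge} once more, now in the reverse direction, this pointwise convergence of Fourier transforms yields that $\mu_n * \nu_n$ converges weakly to $\mu * \nu$, which is the desired conclusion. There is essentially no genuine obstacle here: the whole argument is carried out at each fixed frequency $\xi$, so no uniform control over $\xi$ is required, and the only ingredients are the characteristic-function criterion for weak convergence (Theorem~\ref{equivalent-condition-weak-converge}) together with the elementary factorization $\wh{\mu * \nu} = \wh{\mu}\,\wh{\nu}$.
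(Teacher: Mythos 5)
Your proof is correct and follows exactly the route the paper intends: the paper gives no separate proof, stating only that the lemma is ``straightforward'' from Theorem~\ref{equivalent-condition-weak-converge}, and your argument (pointwise convergence of Fourier transforms, the factorization $\wh{\mu*\nu}=\wh{\mu}\,\wh{\nu}$, and the converse direction of the same theorem applied to the probability measure $\mu*\nu$) is precisely that intended argument, written out in full.
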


To prove the spectrality of measures, we have to rely on the properties of Hadamard triples. We list some useful facts in the following lemma, see \cite{Laba-Wang-2002,Dutkay-Haussermann-Lai-2019} for details.
\begin{lemma}\label{lemma-HT}
  Let $(R,B,L)$ be a Hadamard triple in $\R^d$. Then we have

\noindent$(\mathrm{i})$ $(R,B+b_0, L+\ell_0)$ is also a Hadamard triple for all $b_0,\ell_0 \in \Z^d$;

\noindent$(\mathrm{ii})$ the elements in $B$ are in distinct residue classes modulo $R \Z^d$; the elements in $L$ are in distinct residue classes modulo $R^{\mathrm{T}} \Z^d$, where the superscript ${^\mathrm{T}}$ denotes the transpose of a matrix;

\noindent$(\mathrm{iii})$ if $\wt{L} \equiv L \pmod{ R^\mathrm{T}\Z^d}$, then $(R,B,\wt{L})$ is also a Hadamard triple;

\noindent$(\mathrm{iv})$ if $\set{(R_j, B_j, L_j): 1\le j \le n}$ are finitely many Hadamard triples in $\R^d$, let $$\mathbf{R}= R_n R_{n-1} \cdots R_1,\quad \mathbf{B} = (R_n R_{n-1} \cdots R_2) B_1 + \cdots + R_n B_{n-1} + B_n, $$ and $$\mathbf{L} = L_1 + R_1^{\mathrm{T}} L_2 + \cdots + (R_1^{\mathrm{T}} R_2^{\mathrm{T}} \cdots R_{n-1}^{\mathrm{T}}) L_n,$$
  then $(\mathbf{R},\mathbf{B},\mathbf{L})$ is a Hadamard triple.
\end{lemma}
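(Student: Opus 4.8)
The plan is to reduce everything to the reformulation that $(R,B,L)$ is a Hadamard triple if and only if $\#B=\#L=N$ and the exponentials $\set{e_\ell:\ell\in L}$ are orthonormal in $L^2(\delta_{R^{-1}B})$, equivalently
$$\wh{\delta_{R^{-1}B}}(\ell-\ell')=\frac{1}{N}\sum_{b\in B}e^{-2\pi i (R^{-1}b)\cdot(\ell-\ell')}=\delta_{\ell,\ell'}\qquad(\ell,\ell'\in L),$$
since a square $N\times N$ matrix is unitary precisely when its columns are orthonormal, and orthonormality of $N$ vectors in the $N$-dimensional space $L^2(\delta_{R^{-1}B})$ already forces completeness. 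With this reformulation, parts $(\mathrm{i})$--$(\mathrm{iii})$ are short computations and part $(\mathrm{iv})$ is an induction whose inductive step is a two-triple composition.

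For $(\mathrm{i})$, I would write the entry for $(R,B+b_0,L+\ell_0)$ and expand $(R^{-1}(b+b_0))\cdot(\ell+\ell_0)$ by bilinearity; the three terms beyond $(R^{-1}b)\cdot\ell$ factor out as a global phase $c=e^{-2\pi i(R^{-1}b_0)\cdot\ell_0}$ and two diagonal unitary matrices $D_1=\mathrm{diag}(e^{-2\pi i(R^{-1}b)\cdot\ell_0})_{b}$, $D_2=\mathrm{diag}(e^{-2\pi i(R^{-1}b_0)\cdot\ell})_{\ell}$, so that the new matrix equals $c\,D_1HD_2$ with $H$ the old unitary matrix; hence it is unitary. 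For $(\mathrm{ii})$, if $b\equiv b'\pmod{R\Z^d}$, say $b-b'=Rk$, then $(R^{-1}(b-b'))\cdot\ell=k\cdot\ell\in\Z$ for every $\ell\in L$, so rows $b$ and $b'$ coincide; unitarity forces $b=b'$. The statement for $L$ is identical after the identity $(R^{-1}b)\cdot(R^{\mathrm T}k)=b\cdot k\in\Z$, which uses $(R^{-1})^{\mathrm T}R^{\mathrm T}=I$. Part $(\mathrm{iii})$ uses the same identity: replacing $\ell$ by $\wt\ell=\ell+R^{\mathrm T}k$ multiplies each entry by $e^{-2\pi i(R^{-1}b)\cdot R^{\mathrm T}k}=e^{-2\pi i\,b\cdot k}=1$, so the matrix is unchanged up to reordering columns, while $(\mathrm{ii})$ guarantees $\#\wt L=\#L$.

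For $(\mathrm{iv})$ I would induct on $n$, the inductive step being the composition of $(R_1,B_1,L_1)$ and $(R_2,B_2,L_2)$ into $(R_2R_1,\ R_2B_1+B_2,\ L_1+R_1^{\mathrm T}L_2)$. First I would settle cardinalities: the maps $(b_1,b_2)\mapsto R_2b_1+b_2$ and $(\ell_1,\ell_2)\mapsto\ell_1+R_1^{\mathrm T}\ell_2$ are injective by $(\mathrm{ii})$, so the new $B$ and the new $L$ each have $N_1N_2$ elements and the new matrix is square of size $N_1N_2$. Using $(R_2R_1)^{-1}(R_2b_1+b_2)=R_1^{-1}b_1+(R_2R_1)^{-1}b_2$, the Fourier transform of $\delta_{(R_2R_1)^{-1}(R_2B_1+B_2)}$ factors as $\wh{\delta_{R_1^{-1}B_1}}(\xi)\,\wh{\delta_{(R_2R_1)^{-1}B_2}}(\xi)$. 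Evaluating at $\xi=(\ell_1-\ell_1')+R_1^{\mathrm T}(\ell_2-\ell_2')$ and applying $(R^{-1}b)\cdot(R^{\mathrm T}k)\in\Z$ repeatedly, the first factor collapses to $\wh{\delta_{R_1^{-1}B_1}}(\ell_1-\ell_1')=\delta_{\ell_1,\ell_1'}$.

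The main obstacle is the second factor, which carries a genuine cross term $\big((R_2R_1)^{-1}b_2\big)\cdot(\ell_1-\ell_1')$ coupling $b_2$ with $\ell_1$, so the combined matrix is \emph{not} a clean tensor product. I would dispose of it by a case analysis driven by the first factor: when $\ell_1\ne\ell_1'$ the first factor already vanishes and kills the whole product; when $\ell_1=\ell_1'$ the cross term is identically zero, and the second factor reduces to $\wh{\delta_{R_2^{-1}B_2}}(\ell_2-\ell_2')=\delta_{\ell_2,\ell_2'}$ by the Hadamard property of $(R_2,B_2,L_2)$. In both cases the product equals $\delta_{\ell_1,\ell_1'}\delta_{\ell_2,\ell_2'}$, which is exactly the orthonormality of the $N_1N_2$ new exponentials, hence unitarity. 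Finally, matching $R_2R_1=R_n\cdots R_1$, $R_n\mathbf{B}'+B_n$, and $\mathbf{L}'+(R_{n-1}\cdots R_1)^{\mathrm T}L_n$ against the stated formulas (with $\mathbf{B}',\mathbf{L}'$ the data for the first $n-1$ triples) closes the induction.
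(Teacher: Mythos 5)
Your proof is correct. One point of comparison worth noting: the paper itself does not prove this lemma at all --- it simply cites \cite{Laba-Wang-2002,Dutkay-Haussermann-Lai-2019} ``for details'' --- so your argument is a self-contained substitute rather than a parallel to an in-paper proof. Your reformulation (unitarity of the square matrix $\Leftrightarrow$ orthonormality of $\set{e_\ell : \ell \in L}$ in $L^2(\delta_{R^{-1}B})$, i.e.\ $\wh{\delta}_{R^{-1}B}(\ell-\ell')=\delta_{\ell,\ell'}$) is the standard one, and parts (i)--(iii) are handled exactly as in the cited sources: diagonal phase factors for (i), repeated rows/columns contradicting unitarity for (ii), and column invariance under $R^{\mathrm T}\Z^d$-translation for (iii). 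The one genuinely non-trivial point is the one you isolated in (iv): the composed matrix is \emph{not} a tensor product of the two Hadamard matrices because of the cross term $\big((R_2R_1)^{-1}b_2\big)\cdot(\ell_1-\ell_1')$, and your case analysis (the first factor $\delta_{\ell_1,\ell_1'}$ kills the product when $\ell_1\ne\ell_1'$, while the cross term vanishes identically when $\ell_1=\ell_1'$) is precisely the correct way to dispose of it; this is also how the composition property is established in \cite{Dutkay-Haussermann-Lai-2019}. Two small caveats, neither a gap: the injectivity of $(b_1,b_2)\mapsto R_2b_1+b_2$ and $(\ell_1,\ell_2)\mapsto \ell_1+R_1^{\mathrm T}\ell_2$, which you correctly derive from (ii), is what licenses writing $\delta_{(R_2R_1)^{-1}\mathbf{B}}$ as the convolution whose Fourier transform factors, so it belongs logically \emph{before} the factorization (as you have it); and in (iii) the appeal to (ii) is to the original triple $(R,B,L)$ --- distinctness of residues of $L$ is what makes the correspondence $L\to\wt L$ a bijection --- which is how you use it, but the phrasing could be made explicit.
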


The next theorem is often used to check whether a measure is spectral.
Although Jorgensen and Pedersen in~\cite{Jorgensen-Pedersen-1998} proved this theorem only for probability measures with compact support, it actually holds for all probability measures on $\R^d$.
For convenience, we give a detailed proof.
We use $U(x,\gamma)$ to denote the open ball centred at $x$ with radius $\gamma$ in $\R^d$.

\begin{theorem}\label{criterion}
  Let $\mu \in \mcal{P}(\R^d)$, and let $\Lambda \subseteq \R^d$ be a countable subset. We define $$Q(\xi) = \sum_{\lambda\in \Lambda} |\wh{\mu}(\lambda + \xi)|^2.$$

\noindent$(i)$ The family of exponential functions $\set{e_\lambda(x): \lambda \in \Lambda}$ forms an orthonormal set in $L^2(\mu)$ if and only if $Q(\xi) \le 1$ for all $ \xi \in \R^d$.

\noindent$(ii)$ The family of exponential functions $\set{e_\lambda(x): \lambda \in \Lambda}$ forms an orthonormal basis in $L^2(\mu)$ if and only if $Q(\xi) = 1$ for all $\xi \in \R^d$.
\end{theorem}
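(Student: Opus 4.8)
The plan is to reduce everything to the single computation that, for $\xi,\lambda \in \R^d$,
$$\la e_\xi, e_\lambda \ra_{L^2(\mu)} = \int_{\R^d} e^{-2\pi i \xi \cdot x} \, \overline{e^{-2\pi i \lambda \cdot x}} \D \mu(x) = \wh{\mu}(\xi - \lambda),$$
so in particular $\|e_\lambda\|_{L^2(\mu)}^2 = \wh{\mu}(0) = 1$ for every $\lambda$. Since $\mu$ is a positive measure, $|\wh{\mu}(-t)| = |\wh{\mu}(t)|$, and hence for each $\eta \in \R^d$ one has $\sum_{\lambda \in \Lambda} |\la e_\eta, e_\lambda\ra|^2 = \sum_{\lambda} |\wh{\mu}(\eta - \lambda)|^2 = Q(-\eta)$. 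As $\eta$ ranges over $\R^d$ so does $-\eta$, so the statement ``$Q(\xi) \le 1$ (resp. $Q(\xi)=1$) for all $\xi$'' is identical to ``$\sum_{\lambda} |\la e_\eta, e_\lambda\ra|^2 \le 1$ (resp. $=1$) for all $\eta$''. This is the bridge between $Q$ and the Hilbert-space picture, after which both parts follow from Bessel's inequality and Parseval's identity.

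For part $(1)$, I would first treat the forward direction: if $\set{e_\lambda}$ is orthonormal, Bessel's inequality applied to the unit vector $e_\eta$ gives $\sum_\lambda |\la e_\eta, e_\lambda\ra|^2 \le \|e_\eta\|^2 = 1$, i.e. $Q(-\eta) \le 1$, whence $Q \le 1$ everywhere. For the converse, suppose $Q \le 1$. Each $e_\lambda$ is already a unit vector, so only orthogonality remains. Fixing $\lambda_0 \in \Lambda$ and evaluating at $\xi = -\lambda_0$ yields $\sum_{\lambda} |\wh{\mu}(\lambda - \lambda_0)|^2 = Q(-\lambda_0) \le 1$; but the single term $\lambda = \lambda_0$ already contributes $|\wh{\mu}(0)|^2 = 1$, forcing every other term to vanish, i.e. $\la e_\lambda, e_{\lambda_0}\ra = \wh{\mu}(\lambda - \lambda_0) = 0$ for $\lambda \ne \lambda_0$. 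Thus $\set{e_\lambda}$ is orthonormal.

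For part $(2)$, the forward direction is immediate from Parseval: if $\set{e_\lambda}$ is an orthonormal basis then $\sum_\lambda |\la e_\eta, e_\lambda\ra|^2 = \|e_\eta\|^2 = 1$ for all $\eta$, i.e. $Q \equiv 1$. For the converse, $Q \equiv 1$ forces $Q \le 1$, so part $(1)$ already makes $\set{e_\lambda}$ an orthonormal set; what remains is completeness. Let $V = \overline{\mathrm{span}}\set{e_\lambda : \lambda \in \Lambda}$ with orthogonal projection $P_V$. Since $\set{e_\lambda}$ is an orthonormal basis of $V$, Parseval within $V$ gives $\|P_V e_\eta\|^2 = \sum_\lambda |\la e_\eta, e_\lambda\ra|^2 = Q(-\eta) = 1 = \|e_\eta\|^2$; as an orthogonal projection cannot increase the norm, equality forces $P_V e_\eta = e_\eta$, so $e_\eta \in V$ for every $\eta \in \R^d$. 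It then suffices to know that $\set{e_\eta : \eta \in \R^d}$ is total in $L^2(\mu)$, for then $V = L^2(\mu)$ and $\set{e_\lambda}$ is an orthonormal basis.

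I expect the totality of the exponentials to be the only substantial point, the rest being bookkeeping around Bessel and Parseval. I would establish it through the uniqueness theorem for Fourier transforms: if $f \in L^2(\mu)$ satisfies $\la f, e_\eta\ra = \int_{\R^d} f\, e^{2\pi i \eta \cdot x} \D \mu(x) = 0$ for all $\eta$, then the finite complex measure $f \D \mu$ (finite because $\mu$ is a probability measure, so $L^2(\mu) \sse L^1(\mu)$) has identically vanishing Fourier transform, hence $f \D \mu = 0$ and $f = 0$ $\mu$-a.e. When $\mu$ has compact support one may alternatively invoke Stone--Weierstrass, the exponentials forming a conjugation-closed, point-separating subalgebra of $C(\mathrm{spt}\,\mu)$ containing the constants. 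Either route gives $\set{e_\eta : \eta \in \R^d}^\perp = \set{0}$, which completes the argument.
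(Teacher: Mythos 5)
Your proof is correct, and it follows essentially the same route as the argument the paper relies on: the paper does not reprove this theorem but cites Jorgensen--Pedersen and remarks that their compact-support argument extends to arbitrary probability measures, and your proof is exactly that standard argument (the identity $\la e_\xi, e_\lambda\ra = \wh{\mu}(\xi-\lambda)$, Bessel/Parseval, the projection onto the closed span, and totality of all exponentials). Your use of the uniqueness theorem for Fourier--Stieltjes transforms of finite complex measures is precisely the point that makes the result valid without compact support, i.e.\ the extension the paper alludes to; the Stone--Weierstrass alternative you mention is the original compactly supported version.
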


\begin{proof}
  Note that \[ \la e_\xi, e_\lambda \ra_{L^2(\mu)} = \int_{\R^d} e^{2\pi i (\xi - \lambda) \cdot x} \D \mu(x) = \wh{\mu}(\lambda - \xi). \]

  (i) For the necessity, note that $\{ e_\lambda: \lambda \in \Lambda \}$ is an orthonormal set in $L^2(\mu)$, and by the Bessel's inequality, for every $\xi \in \R^d$ we have
  \[ Q(\xi) = \sum_{\lambda\in \Lambda} |\wh{\mu}(\lambda + \xi)|^2 =  \sum_{\lambda\in \Lambda} \big| \la e_{-\xi}, e_\lambda \ra_{L^2(\mu)} \big|^2 \le \|e_{-\xi}\|_{L^2(\mu)}^2 =1.  \]
  For the sufficiency, take $\xi=-\lambda_0$ for some $\lambda_0 \in \Lambda$, and then we have
  \[ 1 \ge Q(-\lambda_0) = \sum_{\lambda \in \Lambda} |\wh{\mu}(\lambda-\lambda_0)|^2 = 1 + \sum_{\lambda \in \Lambda \sm \{\lambda_0\}} \big| \la e_{\lambda_0}, e_{\lambda} \ra_{L^2(\mu)} \big|^2. \]
  It follows that $\la e_{\lambda}, e_{\lambda'} \ra_{L^2(\mu)}=0$ for any $\lambda \ne \lambda' \in \Lambda$.
  This implies that $\{e_\lambda: \lambda\in \Lambda\}$ is an orthonormal set in $L^2(\mu)$.

  (ii) For the necessity, since $\{ e_\lambda: \lambda \in \Lambda \}$ ia an orthonormal basis in $L^2(\mu)$, by the Parseval's identity we have
  \[ Q(\xi) = \sum_{\lambda\in \Lambda} |\wh{\mu}(\lambda + \xi)|^2 =  \sum_{\lambda\in \Lambda} \big| \la e_{-\xi}, e_\lambda \ra_{L^2(\mu)} \big|^2 = \|e_{-\xi}\|_{L^2(\mu)}^2 =1.  \]

  In the following, we show the sufficiency.
  By (i) we have $\{e_\lambda: \lambda\in \Lambda\}$ is an orthonormal set in $L^2(\mu)$.
  Let $H$ be the closure of the spanning space of $\{ e_\lambda: \lambda \in \Lambda \}$ in $L^2(\mu)$.
  It suffices to show $H = L^2(\mu)$.
  Note that the space $C_c(\R^d)$ of continuous functions with compact support in $\R^d$ is dense in $L^2(\mu)$ \cite[Theorem 3.14]{Rudin-1987}.
  We only need to show \[ C_c(\R^d) \subseteq H. \]

  For $\xi\in \R^d$ we have
  \[ \sum_{\lambda\in \Lambda} \big| \la e_{\xi}, e_\lambda \ra_{L^2(\mu)} \big|^2= \sum_{\lambda\in \Lambda} |\wh{\mu}(\lambda - \xi)|^2 =1 = \|e_{\xi}\|_{L^2(\mu)}^2. \]
  Note that $\{e_\lambda: \lambda\in \Lambda\}$ is orthonormal in $L^2(\mu)$.
  We conclude that
  \begin{equation}\label{e-xi-contained}
    \{e_\xi(x) = e^{2\pi i \xi \cdot x}: \xi \in \R^d\} \subseteq H.
  \end{equation}

  Fix $f(x) \in C_c(\R^d)$.
  For $k \ge 1$, we define \[ f_k(x) = \sum_{n \in \Z^d} f(x + kn). \]
  Then $f_k(x)$ is a continuous $k\Z^d$-periodic function.
  Therefore, by the Stone-Weierstrass theorem, %by the completeness of the trigonometric system,
  the function $f_k(x)$ can be uniformly approximated under the supremum norm by functions in the spanning space of $\{ e^{2\pi i n\cdot x / k}: n \in \Z^d \}$.
  Since $H$ is a closed linear subspace, by \eqref{e-xi-contained}, we have $f_k \in H$ for each $k \ge 1$.

  Let \[ M = \sup_{x\in \R^d} |f(x)|,\; \gamma = \sup\{ |x|: x\in \R^d, f(x) \ne 0 \}. \]
  When $k > 2\gamma$, for every $x \in \R^d$ the function $f$ has at most one nonzero value at the points $x+ kn, n \in \Z^d$. It follows that for $k > 2\gamma$, $|f_k(x)| \le M$ for every $x \in \R^d$.
  Moreover, when $k > \gamma$ we have $f_k(x) = f(x)$ for every $|x|< k-\gamma$.
  Thus, for $k > 2\gamma$ we have
  \begin{align*}
    \int_{\R^d} |f_k(x) - f(x)|^2 \D \mu(x) & = \int_{\R^d \setminus U({\bf 0},k-\gamma)} |f_k(x) - f(x)|^2 \D \mu(x) \\
    & \le 4M^2 \mu\big( \R^d \setminus U({\bf 0},k-\gamma) \big)\rightarrow 0.
  \end{align*}
  %It follows that $\{f_k\}$ tends to $f$ in $L^2(\mu)$.
  It follows that $f_k$ tends to $f$ in $L^2(\mu)$ as $k \to \f$.
  Since $\{f_k\} \subseteq H$ and $H$ is closed, we obtain $f\in H$.
  Since $f$ is arbitrary, we conclude that $C_c(\R^d) \subseteq H$. This completes the proof.
\end{proof}

\section{Equi-positivity and admissibility}\label{sec_ea}

\subsection{Equi-positivity}
First, we give the proof of Theorem \ref{general-result}, and it is inspired by~\cite{An-Fu-Lai-2019, Strichartz-2000}.
\begin{proof}[Proof of Theorem \ref{general-result}]
  By Lemma \ref{lemma-HT} (i), we may assume that $0\in L_n$ for $n\ge 1$.
  Since the family $\{\nu_{>n_j}\}$ is equi-positive, there exist $\ep>0$ and $\delta>0$ such that for $x\in [0,1)$ and $j \ge 1$, there exists an integer $k_{x,j} \in \Z$ such that
  $$ |\wh{\nu}_{>n_j}(x+y+k_{x,j})| \ge \ep,$$
  for all $|y| <\delta$,
  and $k_{x,j}=0$ for $x=0$.

  For integers $q > p \ge 0$, we define $\mbf{N}_{p,q} = N_{p+1} N_{p+2} \cdots N_q $, $$ \mbf{B}_{p,q}= N_{p+1} N_{p+2} \cdots N_q \left( \frac{B_{p+1}}{ N_{p+1}} + \frac{B_{p+2}}{N_{p+1} N_{p+2}} + \cdots + \frac{B_q}{ N_{p+1} N_{p+2} \cdots N_q} \right) $$
  and $$\mbf{L}_{p,q} = L_{p+1} + N_{p+1} L_{p+2} + \cdots + ( N_{p+1} N_{p+2}\cdots N_{q-1}) L_q.$$
  By Lemma \ref{lemma-HT} (iv),  $(\mbf{N}_{p,q}, \mbf{B}_{p,q}, \mbf{L}_{p,q})$ is a Hadamard triple.

  We construct a sequence of finite subsets $\Lambda_i \sse \Z$ for $i \ge 1$ by induction.
  Let $m_1 = n_1$ and $\Lambda_1 = \mbf{L}_{0, m_1}$. Note that $0\in \Lambda_1$ and $\Lambda_1$ is a spectrum of $\mu_{m_1}$.
  For $i \ge 2$, suppose that $\Lambda_{i-1}$ has been defined, and
  we choose a sufficiently large element $m_i$ in the sequence $\set{n_j}$ such that $m_i > m_{i-1}$ and for all $ \lambda \in \Lambda_{i-1}$,
  \begin{equation}\label{ineqla}
  \left| \frac{\lambda}{N_1 N_2 \cdots N_{m_i}} \right| < \frac{\delta}{2}.
  \end{equation}
  Now we define
\begin{equation} \label{defLam}
  \Lambda_i = \Lambda_{i-1} + \mbf{N}_{0,m_{i-1}} \set{ \lambda + k_{\lambda,i} \cdot \mbf{N}_{m_{i-1}, m_i}  : \lambda \in \mbf{L}_{m_{i-1}, m_i} },
\end{equation}
  where, by the equi-positivity of $\{ \nu_{>n_j} \}$, the integers $k_{\lambda,i}\in \Z$ are chosen to satisfy
  \begin{equation}\label{lowerbound}
    \left| \wh{\nu}_{>m_i}\left( \frac{\lambda}{ N_{m_{i-1}+1} N_{m_{i-1}+2} \cdots N_{m_i} } + y + k_{\lambda,i} \right) \right|\ge \ep,
  \end{equation}
  for all $ |y|<\delta$,  and $k_{\lambda,i} =0$ for $\lambda=0$.
  Note that $\mu_{m_{i-1}} = \delta_{\mbf{N}_{0,m_{i-1}}^{-1} \mbf{B}_{0,m_{i-1}}}$ is a spectral measure with a spectrum $\Lambda_{i-1}$, and
  $$\mu_{m_i} = \delta_{\mbf{N}_{0,m_{i-1}}^{-1} \mbf{B}_{0,m_{i-1}}} * \delta_{(\mbf{N}_{0,m_{i-1}} \mbf{N}_{m_{i-1}, m_i})^{-1} \mbf{B}_{m_{i-1},m_i}}. $$
  By Lemma \ref{lemma-HT} (iii) and (iv), the set $\Lambda_i$ is a spectrum of $\mu_{m_i}$.  Since $0\in \mbf{L}_{m_{i-1}, m_i}$ and $0\in \Lambda_{i-1}$, it is clear that $0\in \Lambda_i$ and $\Lambda_{i-1} \sse \Lambda_i$.

Write $$\Lambda = \bigcup_{i=1}^\f \Lambda_i.$$
Next we show that $\Lambda$ is a spectrum of $\mu$. By Theorem~\ref{criterion} (ii), it is equivalent to show that
for each $ \xi \in \R$,
$$Q(\xi) = \sum_{\lambda \in \Lambda} |\wh{\mu}(\lambda + \xi)|^2=1.$$

  For each $\xi \in \R$, since $\Lambda_i$ is a spectrum of $\mu_{m_i}$, by Theorem~\ref{criterion} (ii), we have
  \begin{equation}\label{n_i=1}
  \sum_{\lambda \in \Lambda_i} |\wh{\mu}_{m_i}(\lambda + \xi)|^2 =1.
\end{equation}
  It follows that
  \begin{align*}
  \sum_{\lambda \in \Lambda_i} \big |\wh{\mu}(\lambda + \xi)\big|^2 &= \sum_{\lambda \in \Lambda_i} \big|\wh{\mu}_{m_i}(\lambda + \xi)\big|^2 \big|\wh{\mu}_{>m_i}(\lambda + \xi)\big|^2 \\
  &\le \sum_{\lambda \in \Lambda_i} \big|\wh{\mu}_{m_i}(\lambda + \xi)\big|^2 \\
  &\le 1.
 \end{align*}
Letting $i \to \f$, we obtain that
\begin{equation}\label{ineqQ}
Q(\xi) \le 1,
\end{equation}
for all  $\xi \in \R$.

  Fix $\xi\in \R$.  For each $\lambda \in \Lambda$, we define
  $$f(\lambda) = |\wh{\mu}(\lambda+ \xi)|^2,$$
and for $i \ge 1$,
  $$ f_i(\lambda) =
     \begin{cases}
       |\wh{\mu}_{m_i}(\xi+\lambda)|^2, & \mbox{if } \lambda \in \Lambda_i; \\
       0, & \mbox{if } \lambda \in \Lambda \sm \Lambda_i.
     \end{cases}
  $$
  For each $\lambda \in \Lambda$, there exists  $i_0 \ge 1$ such that $\lambda \in \Lambda_i$ for $i \ge i_0$, and it follows that $$ \lim_{i \to \f} f_{i}(\lambda) =  \lim_{i \to \f} |\wh{\mu}_{m_i}(\xi+\lambda)|^2 =f(\lambda). $$
  Choose an integer $i_0 \ge 1$ sufficiently large such that for $i > i_0$,
  \begin{equation}\label{ineqxi}
  \left| \frac{\xi}{N_1 N_2 \cdots N_{m_{i}}} \right| < \frac{\delta}{2}.
\end{equation}
  For each $\lambda \in \Lambda_i$ where $i > i_0$, by~\eqref{defLam}, we have that
 $$\lambda= \lambda_1 + (N_1 N_2 \cdots N_{m_{i-1}})\lambda_2 + (N_1 N_2 \cdots N_{m_i}) k_{\lambda_2,i},$$
 where $\lambda_1 \in \Lambda_{i-1}$ and $\lambda_2\in \mbf{L}_{m_{i-1}, m_i}$.
  By ~\eqref{ineqla} and \eqref{ineqxi}, we have that
  $$\Big|\frac{\lambda_1 + \xi}{N_1 N_2 \cdots N_{m_i}}\Big| < \delta.$$
  It follows from \eqref{lowerbound} that
  \begin{align*}
    f(\lambda) & = |\wh{\mu}(\lambda + \xi)|^2 = \big|\wh{\mu}_{m_i}(\lambda + \xi)\big|^2 \big|\wh{\mu}_{>m_i}(\lambda + \xi)\big|^2 \\
    &=\big|\wh{\mu}_{m_i}(\lambda + \xi)\big|^2 \left| \wh{\nu}_{>m_i}\left( \frac{\lambda + \xi}{N_1 N_2 \cdots N_{m_i}} \right) \right|^2 \\
    & = \big|\wh{\mu}_{m_i}(\lambda + \xi)\big|^2 \left| \wh{\nu}_{>m_i}\left( \frac{\lambda_2}{N_{m_{i-1}+1} \cdots N_{m_i}} + \frac{\lambda_1 + \xi}{N_1 N_2 \cdots N_{m_i}} + k_{\lambda_2,i} \right) \right|^2 \\
    & \ge \ep^2 f_i(\lambda).
  \end{align*}
  Therefore, for $i > i_0$, we have $$f_i(\lambda) \le \ep^{-2} f(\lambda),$$
  for all $\lambda \in \Lambda.$  Let $\rho$ be the counting measure on the set $\Lambda$.  We have that
  $$ \int_\Lambda f(\lambda) \D \rho(\lambda) = \sum_{\lambda \in \Lambda} |\wh{\mu}(\lambda + \xi)|^2 = Q(\xi) .
  $$
By ~\eqref{ineqQ}, $f(\lambda)$ is integrable with respect to the counting measure $\rho$.
Applying the dominated convergence theorem and by \eqref{n_i=1}, we obtain that
\begin{align*}
Q(\xi) &= \lim_{i \to \f} \int_\Lambda f_i(\lambda) \D \rho(\lambda)  \\
&= \lim_{i \to \f} \sum_{\lambda \in \Lambda_i} |\wh{\mu}_{m_i}(\lambda + \xi)|^2   \\
&= 1.
\end{align*}
Hence, by Theorem~\ref{criterion} (ii), the family $\set{e_\lambda: \lambda \in \Lambda}$ is an orthonormal basis in $L^2(\mu)$, and $\mu$ is a spectral measure with a spectrum $\Lambda \subseteq \Z$.
\end{proof}

\subsection{Admissibility}
The equi-positivity condition is rather technical, and it is not easy to check.
Therefore, the admissible family is introduced to guarantee the existence of equi-positive family. For $\mu \in \mcal{P}(\R^d)$, we write
\begin{equation}\label{integral-periodic-zero}
  \mcal{Z}(\mu) = \set{\xi \in \R^d: \wh{\mu}(\xi+k) = 0 \text{ for all } k \in \Z^d},
\end{equation}
for the {\it integral periodic zero set} of Fourier transform of $\mu$.
For $\Phi \sse \mcal{P}(\R^d)$, we write
$$
\mathrm{cl}(\Phi) = \set{ \mu \in \mcal{P}(\R^d): \text{ there exists $\set{\mu_n} \sse \Phi$ such that $\mu_n$ converges weakly to $\mu$} },$$
for the closure of $\Phi$ with respect to the weak topology on  $\mcal{P}(\R^d)$.

\begin{definition}
  Let $\Phi \sse \mcal{P}(\R^d)$. We call $\Phi$ an admissible family if $\mcal{Z}(\mu) = \emptyset$ for every $\mu\in \mathrm{cl}(\Phi)$.
\end{definition}

The following theorem shows that the admissibility implies the equi-positivity under the tightness condition. The proof is similar to the one in \cite{An-Fu-Lai-2019}.
\begin{theorem}\label{admissible-to-equipositive}
  Suppose that $\Phi \sse \mcal{P}(\R^d)$ is tight.
  If $\Phi$ is an admissible family, then $\Phi$ is equi-positive.
\end{theorem}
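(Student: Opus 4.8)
The plan is to argue by contradiction, using the weak compactness supplied by tightness to manufacture a limit measure in $\mrm{cl}(\Phi)$ whose integral periodic zero set meets $[0,1)^d$, contradicting admissibility. First I would unwind the definitions. Equi-positivity asserts the existence of uniform constants $\ep,\delta>0$ such that for every $x\in[0,1)^d$ and every $\mu\in\Phi$ some integer translate $k$ keeps $|\wh{\mu}(x+y+k)|$ above $\ep$ throughout the ball $|y|<\delta$ (with $k=0$ forced when $x=0$). Negating this and testing the constants $\ep=\delta=1/n$ produces sequences $x_n\in[0,1)^d$ and $\mu_n\in\Phi$ with the following failure: for every $k\in\Z^d$ there is some $y$ with $|y|<1/n$ and $|\wh{\mu}_n(x_n+y+k)|<1/n$.

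Next I would extract a limit. Since $[0,1]^d$ is compact and $\Phi$ is tight, after passing to a subsequence I may assume $x_n\to x_0\in[0,1]^d$ and, by Theorem~\ref{weak-compactness}, that $\mu_n$ converges weakly to some $\mu$; by definition $\mu\in\mrm{cl}(\Phi)$. Admissibility then gives $\mcal{Z}(\mu)=\emptyset$, so $x_0\notin\mcal{Z}(\mu)$ and there is a fixed $k_0\in\Z^d$ with $c:=|\wh{\mu}(x_0+k_0)|>0$. When $x_0=0$ one may simply take $k_0=0$, since $\wh{\mu}(0)=1$; this reconciles the argument with the constraint $k=0$ at $x=0$ in the definition.

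The heart of the proof is to show that this single $k_0$ forces $|\wh{\mu}_n(x_n+y+k_0)|\ge c/2$ for all small $y$ once $n$ is large, which contradicts the failure applied to $k=k_0$. To get this I would combine pointwise convergence $\wh{\mu}_n\to\wh{\mu}$ from Theorem~\ref{equivalent-condition-weak-converge} with the equicontinuity of $\{\wh{\mu}_n:n\ge1\}\sse\{\wh{\mu}:\mu\in\Phi\}$ from Lemma~\ref{equicontinuous}. A standard Arzel\`a--Ascoli-type argument upgrades pointwise convergence of an equicontinuous family to uniform convergence on a compact neighborhood of $x_0+k_0$, which contains all the points $x_n+y+k_0$ with $|y|<1/n$ for $n$ large. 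Splitting
$$ |\wh{\mu}_n(x_n+y+k_0)-\wh{\mu}(x_0+k_0)|\le |\wh{\mu}_n(x_n+y+k_0)-\wh{\mu}(x_n+y+k_0)| + |\wh{\mu}(x_n+y+k_0)-\wh{\mu}(x_0+k_0)|, $$
the first term is controlled by this uniform convergence and the second by uniform continuity of $\wh{\mu}$ together with $x_n+y\to x_0$; both fall below $c/4$ for large $n$ and all $|y|<1/n$, so the reverse triangle inequality yields $|\wh{\mu}_n(x_n+y+k_0)|\ge c-c/2=c/2$.

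Finally, the failure provides, for each $n$, some $y_n$ with $|y_n|<1/n$ and $|\wh{\mu}_n(x_n+y_n+k_0)|<1/n$; once $1/n<c/2$ this contradicts the lower bound just obtained. The main obstacle I anticipate is precisely the uniformity in $y$: the lower bound must hold simultaneously for every $y$ in the shrinking ball, so passing to the limit pointwise is not enough, and one must control $\wh{\mu}_n$ and $\wh{\mu}$ together over a whole neighborhood, which is exactly what equicontinuity (hence tightness) buys. A minor point to verify separately is that the forced choice $k=0$ at $x=0$ causes no trouble: near the origin equicontinuity keeps $|\wh{\mu}_n|$ close to $\wh{\mu}_n(0)=1$, so no failure point $x_n$ can sit at $0$ for large $n$.
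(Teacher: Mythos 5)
Your proof is correct, but it is organized differently from the paper's. The paper argues in two steps: first, for each \emph{fixed} $x\in[0,1]^d$ it proves (by contradiction, using tightness, Theorem~\ref{weak-compactness}, Theorem~\ref{equivalent-condition-weak-converge} and admissibility) a lower bound $\ep_x>0$ for $\sup_{k\in\Z^d}|\wh{\mu}(x+k)|$ that is uniform over $\mu\in\Phi$; then it uses equicontinuity (Lemma~\ref{equicontinuous}) to enlarge each pointwise bound to a $\delta_x$-ball and invokes a \emph{finite subcover} of $[0,1]^d$ to produce the uniform constants $\ep,\delta$ and the translates $k_{x,\mu}$ (inherited from the nearest center $x_j$). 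You instead negate the full equi-positivity statement at once with constants $\ep=\delta=1/n$, extract convergent subsequences in both variables ($x_n\to x_0$ by compactness of the cube, $\mu_n\to\mu$ weakly by tightness), apply admissibility at the single limit point $x_0$, and transfer the resulting lower bound back to the points $x_n+y+k_0$ via the Arzel\`a--Ascoli-type upgrade from pointwise to locally uniform convergence for equicontinuous families. The ingredients are identical (tightness is used twice in both proofs, once for weak compactness and once for equicontinuity; admissibility enters through a weak limit lying in $\mathrm{cl}(\Phi)$; the $x=0$ constraint is checked separately near the origin), but the bookkeeping differs: the paper's covering argument makes the uniformity in $x$ explicit and never needs uniform convergence of the $\wh{\mu}_n$, whereas your version is a cleaner single contradiction whose covering argument is hidden inside the pointwise-to-uniform convergence lemma — a standard fact, but one you should state and prove (or cite) explicitly, since the paper's toolkit contains only Lemma~\ref{equicontinuous} and the two weak-convergence theorems.
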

\begin{proof}
  We first claim that for each $x\in [0,1]^d$, there exists $\ep_x>0$ such that for all $\mu\in \Phi$ we have $$ \sup\set{ |\wh{\mu}(x+k)|: \;  k \in \Z^d } > \ep_x. $$
  Suppose that we may find $x_0 \in [0,1]^d$ such that for each $n \ge 1$ there exists $\mu_n \in \Phi$ satisfying $$ \sup\set{ |\wh{\mu}_n(x_0+k)|: \; k \in \Z^d } \le \frac{1}{n}. $$
  Since $\{ \mu_n \} \sse \Phi$ and $\Phi$ is tight, by Theorem \ref{weak-compactness}, there exists a subsequence $\{ \mu_{n_j}\}$ which converges weakly to a Borel probability measure $\mu \in \mcal{P}(\R^d)$.
  It follows from Theorem \ref{equivalent-condition-weak-converge} that for every $k \in \Z^d$ we have $$ \wh{\mu}(x_0+k) = \lim_{j \to \f} \wh{\mu}_{n_j}(x_0+k) = 0. $$
  This implies $x_0\in \mcal{Z}(\mu)$.
  It contradicts with $\mcal{Z}(\mu)=\emptyset$ since $\Phi$ is admissible and $\mu \in \mathrm{cl}(\Phi)$ .

  Therefore, for $x\in [0,1]^d$ and $\mu \in \Phi$, there exists an integral vector $k_{x,\mu}\in \Z^d$ such that $$ |\wh{\mu}(x+k_{x,\mu})| > \ep_x . $$
  By Lemma \ref{equicontinuous}, the family $\set{\wh{\mu}: \mu \in \Phi}$ is equicontinuous.
  Thus, for each $\ep_x>0$, there exists $\delta_x>0$ such that
  $$ |\wh{\mu}(y_1) -\wh{\mu}(y_2)| <\frac{\ep_x}{2}, $$ for all $ \mu\in \Phi$ and all $|y_1 - y_2|<\delta_x$.
  It follows that for $\mu \in \Phi$ and $|y| < \delta_x$,
  \begin{equation}\label{ineqmue}
     |\wh{\mu}(x+y+k_{x,\mu})| \ge |\wh{\mu}(x+k_{x,\mu})| - |\wh{\mu}(x+y+k_{x,\mu}) - \wh{\mu}(x+k_{x,\mu})| > \frac{\ep_x}{2}.
  \end{equation}
  Since $[0,1]^d$ is compact and $$ [0,1]^d \sse \bigcup_{x\in [0,1]^d} U(x,\delta_x/2), $$
  there exist finitely many $x_1, x_2, \cdots, x_p \in [0,1]^d$ such that $$ [0,1]^d \sse \bigcup_{j=1}^p U(x_j,\delta_{x_j}/2). $$
  Let $ \ep = \min \set{ \ep_{x_j}/2: j=1,2,\cdots, p}$ and $\delta = \min\{ \delta_{x_j}/2: j=1,2,\cdots,p \}$.

  For each $x\in [0,1)^d \sm \set{{\bf 0}}$, we may find some $1\le j \le p$ such that $x\in U(x_j,\delta_{x_j}/2) $.
  For $\mu \in \Phi$ and $|y| < \delta$, we have $| x- x_j + y | < \delta_{x_j}$, and by \eqref{ineqmue} it follows that
  \[ |\wh{\mu}(x+y+k_{x_j,\mu})| = \left|\wh{\mu}\big( x_j + (x-x_j+y)+k_{x_j,\mu}\big) \right| \ge \frac{\ep_{x_j}}{2} \ge \ep. \]
  Thus, we set $k_{x,\mu}=k_{x_j,\mu}$.

For $x={\bf 0}$, noting that  $|y| < \delta$ implies $|y| < \delta_{x_1}$, we have that
 $$ |\wh{\mu}(y)| \ge |\wh{\mu}({\bf 0})| - |\wh{\mu}({\bf 0}) -\wh{\mu}(y)|  \ge 1- \frac{\ep_{x_1}}{2} \ge \ep ,$$
for $\mu \in \Phi$ and $|y| < \delta$. Therefore, we set $k_{x,\mu}={\bf 0}$ for $x={\bf 0}$, and the conclusion holds.
\end{proof}

Next theorem is a direct consequence of Theorem~\ref{general-result} and Theorem \ref{admissible-to-equipositive}.
\begin{theorem}\label{thm-adm}
  Let $\set{(N_n, B_n , L_n): n \ge 1}$ be a sequence of Hadamard triples in $\R$. Let the probability measure $\mu$ be the weak limit of $\mu_n$ given by \eqref{mu-n}.
  If there exists a subsequence of positive integers $\{ n_j \}$ such that the family $\{ \nu_{>n_j} \}$ is tight and admissible, then $\mu$ is a spectral measure with a spectrum in $\Z$.
\end{theorem}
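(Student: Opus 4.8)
The plan is to obtain the conclusion by simply composing the two preceding results, since Theorem~\ref{thm-adm} sits exactly at the junction between the admissibility hypothesis and the equi-positivity hypothesis. Concretely, I would set $\Phi = \set{\nu_{>n_j}: j \ge 1} \sse \mcal{P}(\R)$, the family indexed by the given subsequence, and observe that by assumption $\Phi$ is both tight and admissible.

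First I would invoke Theorem~\ref{admissible-to-equipositive}. Its hypotheses are precisely that $\Phi$ is tight and admissible (here with $d=1$), and its conclusion is that $\Phi$ is an equi-positive family. Thus the subsequence $\set{n_j}$ furnished by the hypothesis of Theorem~\ref{thm-adm} is exactly a subsequence for which $\set{\nu_{>n_j}}$ is equi-positive.

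Having produced such a subsequence, I would then apply Theorem~\ref{general-result} to the same sequence of Hadamard triples $\set{(N_n,B_n,L_n): n \ge 1}$ and the same weak limit $\mu$. Its hypothesis---the existence of a subsequence $\set{n_j}$ making $\set{\nu_{>n_j}}$ equi-positive---is now satisfied, so it yields directly that $\mu$ is a spectral measure, completing the argument.

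Since all the analytic content has already been carried out in Theorem~\ref{general-result} (the inductive construction of the spectrum $\Lambda$ and the verification that $Q(\xi) \equiv 1$) and in Theorem~\ref{admissible-to-equipositive} (the compactness-plus-equicontinuity extraction of uniform lower bounds on $|\wh{\mu}|$), there is no genuine obstacle remaining. The only point requiring care is bookkeeping: ensuring that the subsequence produced by the admissibility hypothesis is the very one fed into the equi-positivity criterion, and that the ambient dimension is specialized to $d=1$ throughout. No new estimate is needed.
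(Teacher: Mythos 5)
Your proposal is correct and is exactly the paper's argument: the paper states Theorem~\ref{thm-adm} as a direct consequence of Theorem~\ref{admissible-to-equipositive} (tight $+$ admissible $\Rightarrow$ equi-positive, applied to $\Phi=\set{\nu_{>n_j}}$) followed by Theorem~\ref{general-result}. Your bookkeeping remarks about using the same subsequence and specializing to $d=1$ are precisely the only points of care, so nothing is missing.
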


\section{Proof of Theorem \ref{main-result}}\label{sec_pf}

For a finite subset $B \sse \Z$, we set
$$
M_{B}(\xi) = \frac{1}{\# B} \sum_{b \in B} e^{-2\pi i b \xi}.
$$
In fact, $M_B$ is the Fourier transform of the discrete measure $\delta_B$.
If $(N,B,L)$ is a Hadamard triple in $\R$, then the set $L$ is a spectrum of the discrete measure $\delta_{N^{-1}B}$.  Since $\wh{\delta}_{N^{-1} B}(\xi) = M_B(\xi/N)$, by Theorem \ref{criterion}, we have that for all $\xi \in \R$,
$$
\sum_{\ell\in L} \left| M_{B}\left( \frac{\xi+\ell}{N}\right) \right|^2 = 1.
$$

Recall that $\set{(N_j, B_j, L_j): 1 \le j \le m}$ is a set of finitely many Hadamard triples in $\R$, and for $ \omega\in \Omega$,
$$ \mu_\omega = \delta_{N_{\omega_1}^{-1} B_{\omega_1}} * \delta_{(N_{\omega_1} N_{\omega_2})^{-1} B_{\omega_2}} * \cdots * \delta_{(N_{\omega_1} N_{\omega_2}\cdots N_{\omega_k})^{-1} B_{\omega_k} }* \cdots. $$
 The Fourier transform of $ \mu_\omega$ is given by
 $$
 \wh{\mu}_{\omega}(\xi) = \wh{\mu}_{\sigma^n(\omega)}\big( (N_{\omega_1} N_{\omega_2} \cdots N_{\omega_n})^{-1}\xi \big) \prod_{k=1}^n M_{B_{\omega_k}}\big( (N_{\omega_1} N_{\omega_2}\cdots N_{\omega_k})^{-1}\xi\big).$$

For a function $f:\R \to \C$, we denote the zero set of $f$ by
$$\mcal{O}(f)=\set{x\in \R: f(x) =0} .
$$
The next lemma indicates that there are only finitely many zero points of  $\{\wh{\mu}_\omega\}$ in every finite interval.
\begin{lemma}\label{lemma-finite-set}
  For every $h >0$, the set
  \begin{equation}\label{set-finite-1}
    [-h,h] \cap \left( \bigcup_{\omega \in \Omega} \mcal{O}(\wh{\mu}_\omega) \right)
  \end{equation}
  is finite.
\end{lemma}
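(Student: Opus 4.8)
The plan is to reduce the (a priori uncountable) union of zero sets to a \emph{finite} union of scaled copies of the mask zero sets $\mcal{O}(M_{B_j})$, by showing that each word $\omega$ can only contribute a zero inside $[-h,h]$ through its first few coordinates. First I would use the product expansion, for $\omega \in \Omega$,
$$\wh{\mu}_\omega(\xi) = \prod_{k=1}^\f M_{B_{\omega_k}}\left( \frac{\xi}{N_{\omega_1} N_{\omega_2} \cdots N_{\omega_k}} \right),$$
which converges absolutely and locally uniformly: the finitely many functions $M_{B_j}$ satisfy a uniform Lipschitz bound $|M_{B_j}(t)-1| \le C|t|$ near $0$, while $\sum_k |\xi|/|N_{\omega_1}\cdots N_{\omega_k}| < \f$ since each $N_j$ is an expanding integer, hence $|N_j| \ge 2$. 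For an absolutely convergent infinite product the value vanishes if and only if one of the factors vanishes; therefore $\xi \in \mcal{O}(\wh{\mu}_\omega)$ forces $M_{B_{\omega_k}}\big(\xi/(N_{\omega_1}\cdots N_{\omega_k})\big)=0$ for some $k \ge 1$, that is, $\xi \in (N_{\omega_1}\cdots N_{\omega_k})\,\mcal{O}(M_{B_{\omega_k}})$.

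Next I would record the single quantitative input. Each $M_{B_j}$ is a nonzero trigonometric polynomial with $M_{B_j}(0)=1$, so $\mcal{O}(M_{B_j})$ is a $1$-periodic, locally finite set avoiding $0$; consequently
$$c := \min_{1 \le j \le m} \inf \set{ |z| : z \in \mcal{O}(M_{B_j}) } > 0,$$
where indices $j$ with $\mcal{O}(M_{B_j})=\emptyset$ are discarded (if all are empty the statement is trivial). Combining this with the decomposition above, any $\xi \in [-h,h]\cap \mcal{O}(\wh{\mu}_\omega)$ satisfies, for the corresponding $k$,
$$h \ge |\xi| = |N_{\omega_1}\cdots N_{\omega_k}| \left| \frac{\xi}{N_{\omega_1}\cdots N_{\omega_k}} \right| \ge 2^k c,$$
so $k \le \log_2(h/c)$. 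Thus only finitely many levels $k$, say $1 \le k \le K$ with $K = \lfloor \log_2(h/c) \rfloor$, can produce a zero inside $[-h,h]$.

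Finally I would assemble the pieces. For each $k \le K$ there are only $m^k$ possible prefixes $(\omega_1,\ldots,\omega_k)$, and for each prefix the set $(N_{\omega_1}\cdots N_{\omega_k})\,\mcal{O}(M_{B_{\omega_k}}) \cap [-h,h]$ is finite, since a locally finite set scaled by a nonzero factor and intersected with a bounded interval is finite. Hence
$$[-h,h] \cap \bigcup_{\omega \in \Omega} \mcal{O}(\wh{\mu}_\omega) \sse \bigcup_{k=1}^{K} \bigcup_{(\omega_1,\ldots,\omega_k)} \Big( (N_{\omega_1}\cdots N_{\omega_k})\,\mcal{O}(M_{B_{\omega_k}}) \cap [-h,h] \Big)$$
is a finite union of finite sets, hence finite.

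The argument uses neither a separation nor the $\gcd$ hypothesis, and the only genuinely essential point — the main obstacle — is the cutoff: a zero in the fixed window $[-h,h]$ can arise from only boundedly many initial coordinates of $\omega$. This rests on the competition between the exponential growth $|N_{\omega_1}\cdots N_{\omega_k}| \ge 2^k$ and the uniform positive lower bound $c$ on the moduli of the zeros of the finitely many masks $M_{B_j}$ (together with the standard fact that an absolutely convergent product vanishes only through a vanishing factor). Once the level bound $K$ is in place, the remaining bookkeeping over the finitely many prefixes is routine.
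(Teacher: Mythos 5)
Your proof is correct and follows essentially the same route as the paper's: reduce zeros of $\wh{\mu}_\omega$ to zeros of the masks $M_{B_j}$ via the factorization, play the uniform gap $c>0$ around the origin in the discrete, $1$-periodic sets $\mcal{O}(M_{B_j})$ against the growth $|N_{\omega_1}\cdots N_{\omega_k}|\ge 2^k$ to bound the level $k$, and then conclude by a finite union of finite sets. The only (harmless) difference is in justifying that a zero of $\wh{\mu}_\omega$ must come from a vanishing factor: you invoke the standard fact about absolutely convergent infinite products, while the paper deduces it from tightness and equicontinuity of $\{\wh{\mu}_\omega : \omega \in \Omega\}$, which forces the tail factor $\wh{\mu}_{\sigma^n(\omega)}$ to be bounded away from $0$ near the origin; both arguments are valid.
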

\begin{proof}
  Since there are only finitely many Hadamard triples, we may find a compact subset $K \sse \R$ such that $\mu_\omega \in \mcal{P}(K)$ for all $\omega \in \Omega$. Therefore, the family $\{\mu_\omega: \omega \in \Omega\}$ is tight. By Lemma \ref{equicontinuous}, we have that $\{\wh{\mu}_\omega: \omega \in \Omega\}$ is equicontinuous.
  Since $\wh{\mu}_\omega(0)=1$ for every $\omega \in \Omega$, there exists $\delta>0 $ such that for all $ \omega \in \Omega$ and all $|y|<\delta$, $$|\wh{\mu}_\omega(y)| \ge 1/2.$$

  Suppose that $\wh{\mu}_{\omega}(\xi)=0$. We choose a sufficiently large integer $n$ such that $$|(N_{\omega_1} \cdots N_{\omega_n})^{-1}\xi| < \delta.$$
 Note that
 $$ \wh{\mu}_{\omega}(\xi) = \wh{\mu}_{\sigma^n(\omega)}\big( (N_{\omega_1} \cdots N_{\omega_n})^{-1}\xi \big) \prod_{k=1}^n M_{B_{\omega_k}}\big( (N_{\omega_1} \cdots N_{\omega_k})^{-1}\xi\big).$$
 Thus, we have $M_{B_{\omega_k}}\big( (N_{\omega_1} \cdots N_{\omega_k})^{-1}\xi\big)=0$ for some $k \ge 1$.
 Therefore, $\wh{\mu}_{\omega}(\xi)=0$ if and only if there exists $k \ge 1$ such that
$$
M_{B_{\omega_k}}\big( (N_{\omega_1} \cdots N_{\omega_k})^{-1}\xi\big) =0.
$$
It follows that
  \begin{align*}
    \mcal{O}(\wh{\mu}_\omega)&  = \bigcup_{k=1}^\f N_{\omega_1} N_{\omega_2} \cdots N_{\omega_k} \mcal{O}(M_{B_{\omega_k}}) \\
    & \sse \bigcup_{j=1}^m \bigcup_{k=1}^\f N_{\omega_1} N_{\omega_2} \cdots N_{\omega_k} \mcal{O}(M_{B_j}) \\
    & \sse \bigcup_{j=1}^m \bigcup_{k_1 =0}^\f \bigcup_{k_2=0}^\f \cdots \bigcup_{k_m =0}^\f N_1^{k_1} N_2^{k_2} \cdots N_m^{k_m} \mcal{O}(M_{B_j}).
  \end{align*}

Therefore, it suffices to show that for every $1\le j \le m$, the set
  \begin{equation}\label{set-finite-2}
    [-h, h] \cap \left( \bigcup_{k_1 =0}^\f \bigcup_{k_2=0}^\f \cdots \bigcup_{k_m =0}^\f N_1^{k_1} N_2^{k_2} \cdots N_m^{k_m} \mcal{O}(M_{B_j}) \right)
  \end{equation}
  is finite.
  Since $M_{B_j}$ is extendable to an entire function on the complex plane, the set $\mcal{O}(M_{B_j})$ is a discrete set in $\R$.
  Noting that $0 \not \in \mcal{O}(M_{B_j})$,  we may find  $\delta_j >0$ such that
  $$[-\delta_j,\delta_j] \cap \mcal{O}(M_{B_j}) = \emptyset.$$
If $k_1 + k_2 +\cdots+ k_m > \log(h/\delta_j) / \log 2$, then we have
$$
|N_1^{k_1} N_2^{k_2} \cdots N_m^{k_m}| \delta_j \ge 2^{k_1 +k_2 + \cdots+ k_m} \delta_j  > h.
$$
  It follows that $[-h,h] \cap  \big( N_1^{k_1} N_2^{k_2} \cdots N_m^{k_m} \mcal{O}(M_{B_j}) \big) =\emptyset $.
  Therefore, there are  only finitely many $m$-tuples $(k_1, k_2, \cdots, k_m)$ such that
  $$[-h,h] \cap  \big( N_1^{k_1} N_2^{k_2} \cdots N_m^{k_m} \mcal{O}(M_{B_j}) \big) \ne \emptyset. $$
  Since $\mcal{O}(M_{B_j})$ is a discrete set in $\R$,
  we conclude that the set in (\ref{set-finite-2}) is finite, and this completes the proof.
\end{proof}

\begin{proposition}\label{periodic-zero-set}
  Suppose that $\gcd(B_j - B_j) =1$ for $1\le j \le m$.
  Then we have $\mcal{Z}(\mu_\omega) = \emptyset$ for every $\omega \in \Omega$.
\end{proposition}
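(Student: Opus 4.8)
The plan is to assume for contradiction that $\mcal{Z}(\mu_\omega)\neq\emptyset$ for some $\omega$, and then to extract a contradiction from a rational zero of \emph{maximal denominator}, which is exactly where the hypothesis $\gcd(B_j-B_j)=1$ will be used. First I would record the one-step self-similar identity $\wh{\mu}_\omega(\xi)=M_{B_{\omega_1}}(\xi/N_{\omega_1})\,\wh{\mu}_{\sigma(\omega)}(\xi/N_{\omega_1})$, and combine it with the $\Z$-periodicity of $M_{B_{\omega_1}}$ and the splitting of $\Z$ into residue classes modulo $N_{\omega_1}$ to obtain a recursion: a point $\alpha\in[0,1)$ lies in $\mcal{Z}(\mu_\omega)$ if and only if for every $r\in\set{0,1,\dots,N_{\omega_1}-1}$ one has $M_{B_{\omega_1}}\big((\alpha+r)/N_{\omega_1}\big)=0$ or $(\alpha+r)/N_{\omega_1}\in\mcal{Z}(\mu_{\sigma(\omega)})$. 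I then set $W=\bigcup_{\omega\in\Omega}\big(\mcal{Z}(\mu_\omega)\cap[0,1)\big)$. Since $\mcal{Z}(\mu_\omega)\sse\mcal{O}(\wh{\mu}_\omega)$ (take $k=0$ in the definition), we have $W\sse[0,1)\cap\bigcup_\omega\mcal{O}(\wh{\mu}_\omega)$, so Lemma~\ref{lemma-finite-set} with $h=1$ gives that $W$ is \emph{finite}; moreover $0\notin W$ because $\wh{\mu}_\omega(0)=1\neq0$.

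The second step is to show $W\sse\Q$. The branch in which $M_{B_{\omega_1}}\big((\alpha+r)/N_{\omega_1}\big)=0$ for \emph{all} $r$ is impossible, because the Hadamard identity gives $\sum_{r=0}^{N_{\omega_1}-1}\big|M_{B_{\omega_1}}((\alpha+r)/N_{\omega_1})\big|^2=1$ (using that $L_{\omega_1}$ is a complete residue system mod $N_{\omega_1}$ by Lemma~\ref{lemma-HT}(ii) and that $|M_{B_{\omega_1}}|$ is $\Z$-periodic). Hence every $\alpha\in W$ admits some $r$ with $M_{B_{\omega_1}}\big((\alpha+r)/N_{\omega_1}\big)\neq0$, which by the recursion forces $(\alpha+r)/N_{\omega_1}\in\mcal{Z}(\mu_{\sigma(\omega)})\cap[0,1)\sse W$. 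Iterating yields $\alpha=\alpha_0,\alpha_1,\alpha_2,\dots\in W$ with integer relations $\alpha_{t-1}=N_t\alpha_t-r_t$, so $\alpha_0=\tilde P_t\alpha_t-\tilde s_t$ with $\tilde P_t\uparrow\f$ and $\tilde s_t\in\Z$. Finiteness of $W$ forces a repetition $\alpha_s=\alpha_{s'}$ with $s<s'$; solving the two resulting relations gives $\alpha_s=(\tilde P_s\tilde s_{s'}-\tilde P_{s'}\tilde s_s)/(\tilde P_{s'}-\tilde P_s)\in\Q$, hence $\alpha_0\in\Q$. Thus $W$ is a finite set of rationals in $(0,1)$, each with denominator at least $2$.

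The crux is the maximal-denominator contradiction. I would pick $\alpha=p/Q\in W$ in lowest terms with $Q$ the \emph{largest} denominator occurring in $W$, and write $N=N_{\omega_1}$, $B=B_{\omega_1}$. Since $(\alpha+r)/N=(p+rQ)/(NQ)$ and $\gcd(p,Q)=1$, the reduced denominator of $(\alpha+r)/N$ is $NQ/\gcd(p+rQ,N)\ge Q$, with equality exactly when $N\mid p+rQ$. By maximality of $Q$, any $r$ with $(\alpha+r)/N\in W$ must satisfy $N\mid p+rQ$; this is solvable only if $\gcd(Q,N)=1$, and then the solution $r^\ast$ is unique. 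Consequently, for every $r\neq r^\ast$ the point $(\alpha+r)/N$ has denominator $>Q$, so it lies outside $W\supseteq\mcal{Z}(\mu_{\sigma(\omega)})\cap[0,1)$, and the recursion forces $M_B\big((\alpha+r)/N\big)=0$. Substituting into $\sum_{r}\big|M_B((\alpha+r)/N)\big|^2=1$ leaves $\big|M_B(\beta)\big|=1$ with $\beta=(\alpha+r^\ast)/N=d/Q$, $\gcd(d,Q)=1$. An average of $N$ unit complex numbers has modulus $1$ only when they coincide, so $(b-b')\beta\in\Z$ for all $b,b'\in B$; as $\gcd(d,Q)=1$ this means $Q\mid(b-b')$ for all $b,b'\in B$, i.e.\ $Q\mid\gcd(B-B)=1$. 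Hence $Q=1$, contradicting $Q\ge2$, and therefore $W=\emptyset$, giving $\mcal{Z}(\mu_\omega)=\emptyset$ for all $\omega$.

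I expect the main obstacle to be pinpointing where the gcd hypothesis actually enters: the recursion and the whole descent are valid for arbitrary Hadamard triples (indeed the Example shows $\mcal{Z}(\mu_\omega)$ can be nonempty when $\gcd(B_j-B_j)\neq1$, e.g.\ $\omega=2^\f$), so irreducibility cannot be used prematurely. The decisive insight is that choosing a zero of maximal denominator collapses the recursion so that all mask values $M_B\big((\alpha+r)/N\big)$ except one vanish; the orthogonality identity then pushes that single value to modulus one, and only at this point does $\gcd(B-B)=1$ yield the contradiction. The supporting technical points are the finiteness of $W$ (from Lemma~\ref{lemma-finite-set}), the rationality of $W$ via the pigeonhole repetition, and the monotonicity of denominators under the map $\alpha\mapsto(\alpha+r)/N$.
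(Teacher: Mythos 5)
Your strategy is sound and genuinely different from the paper's, but one step as written is wrong and needs repair. You justify the identity $\sum_{r=0}^{N-1}\big|M_{B}\big((\alpha+r)/N\big)\big|^2=1$ by asserting that $L$ is a \emph{complete} residue system modulo $N$ ``by Lemma~\ref{lemma-HT}(ii)''. That lemma only gives \emph{distinctness} of residues, not completeness: in the paper's definition of a Hadamard triple $(N,B,L)$ one has $\#B=\#L$, which may be strictly smaller than $|N|$ --- for instance $(4,\set{0,2},\set{0,1})$ is a Hadamard triple. The true Hadamard identity is a sum over $L$, namely $\sum_{\ell\in L}\big|M_{B}\big((\xi+\ell)/N\big)\big|^2=1$, and the sum over a full residue system is instead $|N|/\#B$: expanding $|M_B|^2$ and summing the geometric series, the cross terms vanish precisely because the elements of $B$ lie in distinct residue classes mod $N$ (this is the correct use of Lemma~\ref{lemma-HT}(ii)). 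Fortunately both of your uses of the identity survive the correction: in the rationality step you only need the full-residue sum to be positive, which it is; and in the crux, if all terms but one vanish, the surviving term equals $|N|/\#B\ge 1$, while $|M_B|\le 1$ always, so necessarily $\#B=|N|$ and $|M_B(\beta)|=1$, which is exactly what your gcd argument needs. Two smaller points: you should dispose explicitly of the case $\gcd(Q,N)>1$, where no $r^\ast$ exists --- then \emph{all} masks vanish and the positive sum gives an immediate contradiction; and since the paper allows $N_j<0$ (expanding means $|N_j|\ge 2$), you should pass to fractional parts when iterating, which costs nothing because $\mcal{Z}$ and reduced denominators are invariant under integer translation.

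With these patches your proof is correct, and its combinatorial engine differs from the paper's. The paper fixes a single $\xi_0\in\mcal{Z}(\mu_\omega)$ and propagates it forward into sets $Y_n\sse\mcal{Z}(\mu_{\sigma^n(\omega)})$ via the maps $\tau_{\ell,\omega_n}$ indexed by spectrum elements $\ell\in L_{\omega_n}$ (not by residues, which is how it avoids your identity issue); injectivity from the distinct residues of $L$ makes $\#Y_n$ nondecreasing, Lemma~\ref{lemma-finite-set} confines all $Y_n$ to one finite set, and the resulting \emph{stabilization of cardinality} forces a unique surviving branch, i.e.\ a mask value of modulus one. You instead aggregate all zero sets into the finite set $W$, prove $W\sse\Q$ by pigeonhole, and let the \emph{maximal denominator} play the role of the stabilized cardinality: it collapses all branches but one. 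From the modulus-one mask value onward the two proofs coincide ($\gcd(B-B)=1$ forces the point to be an integer, equivalently $Q=1$, contradicting that integers never lie in $\mcal{Z}$, equivalently $Q\ge2$). Your route buys a transparent arithmetic picture of where the hypothesis enters, at the cost of first establishing rationality of $W$, a step the paper never needs; the paper's counting argument is also indifferent to whether $\#B_j<|N_j|$, which is precisely the case your written identity misses.
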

\begin{proof}
  For $a\in \R$, recall that $\wh{\mu*\delta_a}(\xi) = e^{-2\pi i a \xi} \wh{\mu}(\xi)$, and we have that  $\mcal{Z}(\mu) = \mcal{Z}(\mu*\delta_a)$.
  This implies that the translation does not change the integral period zero set of Fourier transform of a measure.
Let $\wt{B}_j$ be the translation of $B_j$ such that $0\in \wt{B}_j $ for $1 \le j \le m$.
Let $\wt{L}_j$ be the subset of $ \set{0,1,\cdots, |N_j|-1}$ such that $\wt{L}_j \equiv L_j \pmod{ N_j\Z}$ for $1\le j \le m$. By Lemma~\ref{lemma-HT} (i) and (iii),  $(N_j,\wt{B}_j ,\wt{L}_j)$ is still a Hadamard triple for $1 \le j \le m$. Let $\mu_\omega$ and $\wt{\mu}_\omega$ be the random convolution generated by $\{(N_j, B_j , L_j), 1\leq j \leq m\}$ and $\{(N_j,\wt{B}_j ,\wt{L}_j), 1\leq j \leq m\}$, respectively.
Clearly  we have that $\mcal{Z}(\mu_\omega) = \mcal{Z}(\wt{\mu}_\omega) $.
Therefore, for simplicity, we assume that $0\in B_j$ and  $L_j \sse \set{0,1,\cdots, |N_j|-1}$ for $1 \le j \le m$.
We prove this proposition by contradiction.

Suppose that there exists $\omega \in \Omega$ such that $\mcal{Z}(\mu_\omega) \ne \emptyset$.  For $\ell \in L_j$, we write
$$
\tau_{\ell,j}(x) = N_j^{-1}(x+\ell).
$$
 Arbitrarily choose $\xi_0 \in \mcal{Z}(\mu_\omega)$ and set $Y_0 = \set{\xi_0}$. For $n \ge 1$, we define
$$ Y_n = \set{ \tau_{\ell,\omega_n}(\xi) : \;\xi \in Y_{n-1}, \; \ell \in L_{\omega_n}, \; M_{B_{\omega_n}}\big( \tau_{\ell,\omega_n}(\xi) \big) \ne 0 }.
$$

First, we show that for each $n \ge 1$,
$$ \# Y_{n-1} \le \# Y_n. $$
Since for each $ \xi \in Y_{n-1}$,
$$ \sum_{\ell \in L_{\omega_n} } \left| M_{B_{\omega_n}}\big( \tau_{\ell,\omega_n}(\xi) \big) \right|^2 =1,$$
  there exists at least one element $\ell \in L_{\omega_n}$ such that $M_{B_{\omega_n}}\big( \tau_{\ell,\omega_n}(\xi) \big) \ne 0$.
  On the other hand, for distinct $\ell_1 \ell_2 \cdots \ell_n \ne \ell_1' \ell_2' \cdots \ell_n'$ where $\ell_j, \ell_j' \in L_{\omega_j}$, we have
  $$\tau_{\ell_n,\omega_n} \circ \cdots \circ \tau_{\ell_2,\omega_2} \circ \tau_{\ell_1,\omega_1} (\xi_0) \ne \tau_{\ell_n',\omega_n} \circ \cdots \circ \tau_{\ell_2',\omega_2} \circ \tau_{\ell_1',\omega_1}(\xi_0).$$
  Otherwise, $$ \frac{ \xi_0 + \ell_1 + N_{\omega_1} \ell_2 + \cdots + N_{\omega_1} \cdots N_{\omega_{n-1}} \ell_n }{ N_{\omega_1} N_{\omega_2} \cdots N_{\omega_n} } = \frac{ \xi_0 + \ell_1' + N_{\omega_1} \ell_2' + \cdots + N_{\omega_1} \cdots N_{\omega_{n-1}} \ell_n' }{N_{\omega_1} N_{\omega_2} \cdots N_{\omega_n}}, $$
  that is, $$  \ell_1 + N_{\omega_1} \ell_2 + \cdots + N_{\omega_1} N_{\omega_2} \cdots N_{\omega_{n-1}} \ell_n = \ell_1' + N_{\omega_1} \ell_2' + \cdots + N_{\omega_1} N_{\omega_2} \cdots N_{\omega_{n-1}} \ell_n'.$$
  Let $j_0 = \min\set{1 \le j \le n: \ell_j \ne \ell_j'}$.
  Then we have $N_{\omega_{j_0}} \mid \ell_{j_0} - \ell_{j_0}'$.
  But by Lemma \ref{lemma-HT} (ii), the elements in $L_{\omega_{j}}$ are in distinct residue classes modulo $N_{\omega_j} \Z$. This leads to a contradiction.
  Therefore, we conclude that $\# Y_{n-1} \le \# Y_n$ for $n \ge 1$.

 Next, we prove that for each $n \ge 0$,
 $$ Y_n \sse \mcal{Z}(\mu_{\sigma^n(\omega)}) .$$
For $n=0$, it is clear that  $Y_0 \sse \mcal{Z}(\mu_\omega)$.  For $n \ge 1$, we assume that $Y_{n-1} \sse \mcal{Z}(\mu_{\sigma^{n-1}(\omega)})$.
For each $\tau_{\ell,\omega_n}(\xi) \in Y_{n}$ where $\xi \in Y_{n-1}$, $\ell \in L_{\omega_n}$, and $M_{B_{\omega_n}}\big( \tau_{\ell,\omega_n}(\xi) \big) \ne 0$, we have that for every $k \in \Z$,
  \begin{align*}
    0 & = \wh{\mu}_{\sigma^{n-1}(\omega)}(\xi + \ell + N_{\omega_n} k) \\
    &= M_{B_{\omega_{n}}}\left( \frac{\xi + \ell}{ N_{\omega_n} } +k \right) \wh{\mu}_{\sigma^{n}(\omega)} \left( \frac{\xi + \ell}{N_{\omega_n}} +k \right)\\
     & = M_{B_{\omega_{n}}}\big( \tau_{\ell,\omega_n}(\xi) \big) \wh{\mu}_{\sigma^{n}(\omega)} \big( \tau_{\ell,\omega_n}(\xi) +k \big),
  \end{align*}
  where the last equality follows from the integral periodicity of $M_{B_{\omega_n}}$.
  Since $M_{B_{\omega_{n}}}\big( \tau_{\ell,\omega_n}(\xi) \big) \ne 0$, we have that
  $$ \wh{\mu}_{\sigma^{n}(\omega)} \big( \tau_{\ell,\omega_n}(\xi) +k \big) =0 ,
  $$ for all $k \in \Z$.
This implies that $\tau_{\ell,\omega_n}(\xi) \in \mcal{Z}(\mu_{\sigma^{n}(\omega)})$. Thus, we have $Y_{n} \sse \mcal{Z}(\mu_{\sigma^{n}(\omega)})$.
  By induction, we obtain that $Y_n \sse \mcal{Z}(\mu_{\sigma^n(\omega)})$ for all $n \ge 0$.

Finally,  we use the increasing cardinality of $Y_n$ and Lemma \ref{lemma-finite-set} to deduce a contradiction.
For every $\xi \in Y_n$, by the definition of $Y_n$, we write $\xi$  as
\begin{align*}
\xi &= \tau_{\ell_n,\omega_n} \circ \cdots \circ \tau_{\ell_2,\omega_2} \circ \tau_{\ell_1,\omega_1} (\xi_0)  \\
 &= \frac{ \xi_0 + \ell_1 + N_{\omega_1} \ell_2 + \cdots + N_{\omega_1} \cdots N_{\omega_{n-1}} \ell_n }{ N_{\omega_1} N_{\omega_2} \cdots N_{\omega_n} } .
\end{align*}
  Since $|N_{\omega_j}|\geq 2$ and $0\le \ell_j < |N_{\omega_j}|$, we have that
  \begin{align*}
    |\xi| &\le \frac{|\xi_0|}{2^n} + \frac{1}{2^{n-1}} + \frac{1}{2^{n-2}} + \cdots + 1 \\
    &\le |\xi_0| + 2.
  \end{align*}
  Let $h= |\xi_0| + 2$. Then $Y_n \sse [-h , h]$ for every $n \ge 1$.
  It follows that
  \begin{align*}
    Y_n &\sse [-h,h] \cap \mcal{Z}(\mu_{\sigma^n(\omega)})  \\
    &\sse [-h,h] \cap \left( \bigcup_{\eta \in \Omega} \mcal{O}(\wh{\mu}_\eta) \right).
  \end{align*}
  By Lemma \ref{lemma-finite-set}, the set $[-h,h] \cap \left( \bigcup_{\eta \in \Omega} \mcal{O}(\wh{\mu}_\eta) \right)$ is  finite. Since the sequence of $\# Y_n$ is increasing, there exists $n_0 \ge 1$ such that $\# Y_n = \# Y_{n+1}$ for all $n>n_0$.
Given $n>n_0$, for every $\xi \in Y_{n}$, there exists a unique $\ell_0 \in L_{\omega_{n+1}}$ such that  $M_{B_{\omega_{n+1}}}\big( \tau_{\ell_0,\omega_{n+1}}(\xi) \big) \ne 0$.
Recall that
$$\sum_{\ell \in L_{\omega_{n+1}}} \left| M_{B_{\omega_{n+1}}}\big( \tau_{\ell,\omega_{n+1}}(\xi) \big) \right|^2 =1.
$$
This implies that  $$ \left| M_{B_{\omega_{n+1}}}\big( \tau_{\ell_0,\omega_{n+1}}(\xi) \big)\right| = \left| \frac{1}{\# B_{\omega_{n+1}}} \sum_{b\in B_{\omega_{n+1}}} e^{-2\pi i b \tau_{\ell_0,\omega_{n+1}}(\xi) } \right| =1. $$
  Since $0 \in B_{\omega_{n+1}}$, we have that $b \tau_{\ell_0,\omega_{n+1}}(\xi) \in \Z$ for $b\in B_{\omega_{n+1}}$.
  Since  $\gcd(B_{\omega_{n+1}} - B_{\omega_{n+1}}) =1$  and $0\in B_{\omega_{n+1}}$, it is clear that $\gcd(B_{\omega_{n+1}})=1$. Then there exist integers $m_b \in \Z$ for $b \in B_{\omega_{n+1}}$ such that $\sum_{b\in B_{\omega_{n+1}}} m_b b =1$.
  This implies that $$ \tau_{\ell_0,\omega_{n+1}}(\xi) = \sum_{b\in B_{n+1}} m_b b\tau_{\ell_0,\omega_{n+1}}(\xi) \in \Z. $$
  Since $\wh{\mu}_{\sigma^{n+1}(\omega)}(0) =1$, we have that $\mcal{Z}(\mu_{\sigma^{n+1}(\omega)}) \cap \Z = \emptyset$,
which contradicts with the fact
$$\tau_{\ell_0,\omega_{n+1}}(\xi) \in Y_{n+1} \sse \mcal{Z}(\mu_{\sigma^{n+1}(\omega)}).
$$
Therefore, the conclusion holds.
\end{proof}

\begin{proposition}\label{prop_closure}
  Let $\Phi=\set{ \mu_\omega: \omega \in \Omega }$. Then we have $\mathrm{cl}(\Phi)=\Phi$.
\end{proposition}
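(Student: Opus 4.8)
The plan is to prove the two inclusions $\Phi \sse \mathrm{cl}(\Phi)$ and $\mathrm{cl}(\Phi) \sse \Phi$. The first is immediate: for any $\omega \in \Omega$ the constant sequence $\mu_\omega, \mu_\omega, \dots$ converges weakly to $\mu_\omega$, so $\Phi \sse \mathrm{cl}(\Phi)$. The substance lies in the reverse inclusion, and the key is to show that the parametrization $\omega \mapsto \mu_\omega$ is continuous from the compact metric space $\Omega$ into $\mcal{P}(\R)$ equipped with the weak topology. Granting this, if $\mu \in \mathrm{cl}(\Phi)$ then $\mu$ is the weak limit of some sequence $\mu_{\omega(j)}$ with $\omega(j) \in \Omega$; compactness of $\Omega$ yields a subsequence $\omega(j_l) \to \omega \in \Omega$, continuity gives $\mu_{\omega(j_l)} \to \mu_\omega$ weakly, and uniqueness of weak limits forces $\mu = \mu_\omega \in \Phi$.

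To establish the continuity I would fix $\omega(j) \to \omega$ in $\Omega$ and argue, via Theorem~\ref{equivalent-condition-weak-converge}, that $\wh{\mu}_{\omega(j)}(\xi) \to \wh{\mu}_\omega(\xi)$ for each $\xi \in \R$. The device is the factorization $\mu_{\omega} = \mu_{\omega}^{(n)} * \mu_{>n,\omega}$ analogous to \eqref{mu-infinite-convolution}, where $\mu_\omega^{(n)}$ is the convolution of the first $n$ Dirac factors and $\mu_{>n,\omega}$ is the tail. Convergence in $\Omega$ means that for each fixed $n$ the initial segments eventually agree, so for all large $j$ one has $\mu_{\omega(j)}^{(n)} = \mu_\omega^{(n)}$, and hence $\wh{\mu}_{\omega(j)}^{(n)} = \wh{\mu}_\omega^{(n)}$.

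The main point is a uniform control of the tail. Since there are only finitely many Hadamard triples, set $C = \max_{1\le j \le m}\max\set{|b|: b\in B_j}$; because each $N_j$ is an expanding integer one has $|N_{\eta_1}\cdots N_{\eta_k}| \ge 2^k$, so $\mu_{>n,\eta}$ is supported in the ball of radius $\sum_{k>n} C\,2^{-k} = C\,2^{-n}$ about the origin, uniformly over all $\eta \in \Omega$. Consequently $|\wh{\mu}_{>n,\eta}(\xi) - 1| \le 2\pi |\xi|\,C\,2^{-n}$ for every $\eta$. Given $\ep>0$, I would first choose $n$ with $2\pi|\xi|\,C\,2^{-n} < \ep/3$, and then choose $j$ large enough that the first $n$ symbols of $\omega(j)$ and $\omega$ coincide; writing $\wh{\mu}_{\omega(j)}(\xi) = \wh{\mu}_\omega^{(n)}(\xi)\,\wh{\mu}_{>n,\omega(j)}(\xi)$ and $\wh{\mu}_\omega(\xi) = \wh{\mu}_\omega^{(n)}(\xi)\,\wh{\mu}_{>n,\omega}(\xi)$ and using $|\wh{\mu}_\omega^{(n)}(\xi)|\le 1$, the two tail factors each lie within $\ep/3$ of $1$, so $|\wh{\mu}_{\omega(j)}(\xi) - \wh{\mu}_\omega(\xi)| < \ep$. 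This yields the pointwise convergence of the Fourier transforms and hence the weak continuity of $\omega \mapsto \mu_\omega$. The principal obstacle is precisely this uniform tail estimate; once it is in place, the remainder is routine bookkeeping with the topology of $\Omega$ and the uniqueness of weak limits.
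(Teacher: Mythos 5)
Your proof is correct and follows essentially the same route as the paper: both arguments extract a convergent subsequence of symbols by compactness of $\Omega$, exploit the head--tail decomposition $\mu_\eta = \mu_{\eta,n} * \mu_{\eta,>n}$ together with the uniform bound $\mathrm{spt}(\mu_{\eta,>n}) \sse [-C2^{-n}, C2^{-n}]$ valid for all $\eta \in \Omega$, and conclude by uniqueness of weak limits. The only difference is in how the continuity of $\omega \mapsto \mu_\omega$ is verified: you check pointwise convergence of Fourier transforms via the estimate $|\wh{\mu}_{\eta,>n}(\xi) - 1| \le 2\pi|\xi| C 2^{-n}$ and invoke Theorem~\ref{equivalent-condition-weak-converge}, whereas the paper tests directly against bounded continuous functions using their uniform continuity on a common compact support---both hinge on exactly the same tail estimate.
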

\begin{proof}
  For $\omega \in \Omega$ and $n \ge 1$, we write $$ \mu_{\omega,n} = \delta_{N_{\omega_1}^{-1} B_{\omega_1}} * \delta_{(N_{\omega_1} N_{\omega_2})^{-1} B_{\omega_2}} * \cdots * \delta_{(N_{\omega_1} N_{\omega_2}\cdots N_{\omega_n})^{-1} B_{\omega_n} },$$
 and $\mu_\omega = \mu_{\omega,n} * \mu_{\omega,>n}$.
 Let $ h = 1 + \max\set{ |b|: b \in B_j, 1\le j \le m } $.
 It is clear that for each $\omega \in \Omega$ and each $n \ge 1$,
 $$ \mathrm{spt}(\mu_{\omega,n}) \sse [-(h-1), h-1], $$
and
$$\mathrm{spt}(\mu_{\omega,>n}) \sse [-2^{-n} h, 2^{-n} h]. $$
Arbitrarily choose a sequence $\{ \mu_{\omega(j)} \}_{j=1}^\infty $ of probability measures in $\Phi$ which converges weakly to a probability measure $\mu \in \mcal{P}(\R)$, where $\{\omega(j)\}_{j=1}^\infty$ is a sequence in $\Omega$ and we write $\omega(j)=\omega_1 (j)\omega_2 (j)\cdots $.
Since $\Omega$ is compact, $\{\omega(j)\}_{j=1}^\infty$ has a convergent subsequence. For simplicity, we assume that $\{\omega(j)\}_{j=1}^\infty$ converges to $\eta=(\eta_k)_{k=1}^\f \in \Omega$.
Next,  we prove that $\mu_{\omega(j)}$ converges weakly to $\mu_{\eta}$.

Fix $f \in C_b(\R)$. For each $\ep>0$, since $f$ is uniformly continuous on the interval $[-h,h]$, there exists  $0<\delta < 1$ such that for all $x,y \in [-h,h]$ with $|x-y| <\delta$, we have $$ |f(x) - f(y)| < \frac{\ep}{2}. $$
  Then we choose a sufficiently large integer $n$ such that $2^{-n} h < \delta$.
For every $\omega \in \Omega$, we have that
  \begin{align*}
    \int_{\R} f(x) \D \mu_{\omega}(x) & = \int_{\R} f(x) \D \mu_{\omega,n}* \mu_{\omega,>n} (x) \\
    &= \int_{\R^2} f(x+y) \D \mu_{\omega,n}\times \mu_{\omega,>n} (x,y) \\
     & = \int_{\R} \int_{\R} f(x+y) \D\mu_{\omega,>n}(y) \D \mu_{\omega,n}(x).
  \end{align*}
Hence, by the uniform continuity of $f$ on $[-h,h]$, we get that
  \begin{eqnarray*}
    && \left| \int_{\R} f(x) \D \mu_{\omega}(x) - \int_{\R} f(x) \D \mu_{\omega,n}(x) \right| \\
    &=& \left| \int_{\R} \int_{\R} \big( f(x+y)- f(x) \big) \D\mu_{\omega,>n}(y) \D \mu_{\omega,n}(x) \right| \\
     &\le&  \int_{-(h-1)}^{h-1} \int_{-2^{-n}h}^{2^{-n}h} \left| f(x+y)- f(x) \right| \D\mu_{\omega,>n}(y) \D \mu_{\omega,n}(x) \\
    &\le& \frac{\ep}{2}.
  \end{eqnarray*}
  Since the sequence $\set{\omega(j)}$ converges to $\eta$, there exists an integer $j_0\ge 1$ such that for all $j \ge j_0$,
  $$ \omega_1(j) \omega_2(j) \cdots \omega_n(j) = \eta_1 \eta_2 \cdots \eta_n,
  $$
  So for $j\ge j_0$, we have that
  $$ \left| \int_{\R} f \D \mu_{\omega(j)} - \int_{\R} f \D \mu_{\eta} \right| \le
    \left| \int_{\R} f \D \mu_{\omega(j)} - \int_{\R} f \D \mu_{\omega(j),n} \right| + \left| \int_{\R} f \D \mu_{\eta} - \int_{\R} f \D \mu_{\eta,n} \right| \le \ep. $$
  It follows that $$ \lim_{j \to \f} \int_{\R} f \D \mu_{\omega(j)} = \int_{\R} f \D \mu_{\eta},$$
for all $ f \in C_b(\R)$. Therefore, $\mu_{\omega(j)}$ converges weakly to $\mu_{\eta}$.

By the uniqueness of weak limit, we have that $\mu = \mu_\eta \in \Phi$, and the conclusion holds.
\end{proof}

\begin{corollary}\label{corollary}
  Suppose that $\gcd(B_j - B_j) =1$ for $1\le j \le m$.
  Then the family $\Phi=\set{ \mu_\omega: \omega \in \Omega }$ is admissible, and thus, is equi-positive.
\end{corollary}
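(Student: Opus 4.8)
The plan is to read off this corollary directly from Propositions~\ref{periodic-zero-set} and~\ref{prop_closure} together with Theorem~\ref{admissible-to-equipositive}, so almost all of the genuine work is already done and what remains is an assembly step.

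First I would unwind the definition: to say that $\Phi$ is admissible means $\mcal{Z}(\mu) = \emptyset$ for every $\mu \in \mathrm{cl}(\Phi)$. The key simplification is Proposition~\ref{prop_closure}, which asserts $\mathrm{cl}(\Phi) = \Phi$; hence every measure in the weak closure is itself of the form $\mu_\omega$ for some $\omega \in \Omega$, and there are no ``new'' limit measures whose integral periodic zero sets we would otherwise have to control separately. This reduces checking admissibility to showing $\mcal{Z}(\mu_\omega) = \emptyset$ for every $\omega \in \Omega$, which is exactly the content of Proposition~\ref{periodic-zero-set} under the standing hypothesis $\gcd(B_j - B_j) = 1$. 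Combining these two facts yields that $\Phi$ is admissible.

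Next I would verify tightness, which is the one place the finiteness of the collection of Hadamard triples enters. Since $\set{(N_j, B_j, L_j): 1 \le j \le m}$ is a finite set, the digit sets $B_j$ are uniformly bounded, and exactly as in the proof of Lemma~\ref{lemma-finite-set} there is a single compact interval $K \sse \R$ with $\mu_\omega \in \mcal{P}(K)$ for all $\omega \in \Omega$. Thus $\Phi \sse \mcal{P}(K)$, and any family contained in $\mcal{P}(K)$ for a fixed compact $K$ is automatically tight, as noted in Section~\ref{sec_pre}.

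Finally I would invoke Theorem~\ref{admissible-to-equipositive}, which states that a tight admissible family is equi-positive. Applying it to $\Phi$, now shown to be both tight and admissible, gives that $\Phi$ is equi-positive, completing the proof. There is no real obstacle at this stage: the substance lies entirely in Proposition~\ref{periodic-zero-set}, where the increasing-cardinality argument combined with the finiteness from Lemma~\ref{lemma-finite-set} forces $\mcal{Z}(\mu_\omega) = \emptyset$, and in Proposition~\ref{prop_closure}, where the weak-closedness of $\Phi$ is established; the corollary itself is a purely formal combination of these statements with Theorem~\ref{admissible-to-equipositive}.
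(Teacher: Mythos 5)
Your proposal is correct and follows exactly the paper's own proof: admissibility is read off from Proposition~\ref{periodic-zero-set} combined with Proposition~\ref{prop_closure}, tightness follows from the uniform compact support $\Phi \sse \mcal{P}(K)$ granted by the finiteness of the Hadamard triples, and Theorem~\ref{admissible-to-equipositive} then yields equi-positivity. No differences worth noting.
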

\begin{proof}
By Proposition~\ref{periodic-zero-set} and Proposition~\ref{prop_closure}, we have that $\Phi=\set{ \mu_\omega: \omega \in \Omega }$ is admissible. Since there are only finitely many Hadamard triples, we may find a compact subset $K \sse \R$ such that $\mu_\omega \in \mcal{P}(K)$ for all $\omega \in \Omega$. This implies that $\Phi=\set{ \mu_\omega: \omega \in \Omega }$ is tight.
By Theorem~\ref{admissible-to-equipositive}, we have that $\Phi=\set{ \mu_\omega: \omega \in \Omega }$ is equi-positive.
\end{proof}

Finally, we are ready to prove the main theorem.
\begin{proof}[Proof of Theorem \ref{main-result}]
  Fix a sequence of positive integers $\set{n_k}$ and $\omega \in \Omega$,
  and write $$\mu = \mu_{\omega,\set{n_k}} = \delta_{N_{\omega_1}^{-n_1} B_{\omega_1}} * \delta_{N_{\omega_1}^{-n_1} N_{\omega_2}^{-n_2} B_{\omega_2}} * \cdots * \delta_{N_{\omega_1}^{-n_1} N_{\omega_2}^{-n_2} \cdots N_{\omega_k}^{-n_k} B_{\omega_k} }* \cdots . $$
  Note that $\mu$ is the corresponding infinite convolution generated by the sequence of Hadamard triples
  $\set{\big( (N_{\omega_k})^{n_k}, B_{w_k}, (N_{\omega_k})^{n_k-1} L_{\omega_k} \big): k \ge 1}$.
  Recall the notations in (\ref{mu-infinite-convolution}) and (\ref{nu-large-than-n}), and we have $$ \nu_{>k} = \delta_{ N_{\omega_{k+1}}^{-n_{k+1}} B_{\omega_{k+1}} } * \delta_{ N_{\omega_{k+1}}^{-n_{k+1}} N_{\omega_{k+2}}^{-n_{k+2}} B_{\omega_{k+2}} } * \cdots.$$
  Let $\eta = \omega_1^{n_1} \omega_2^{n_2} \omega_3^{n_3}\cdots $, that is, $\eta_{j} = \omega_k$ for $n_1 + \cdots + n_{k-1} < j \le n_1 +\cdots + n_k$.
  By Lemma \ref{convolution-weak-convergence}, we factor the measure $\mu_\eta$ into $\mu_\eta = \mu * \rho$ for some $\rho\in \mcal{P}(\R)$. (The measure $\rho$ is also an infinite convolution, but we do not need the precise formula of $\rho$ in our proof.)
  It is also easy to check that $\mu_{\sigma^{n_1 + \cdots + n_k}(\eta)} = \nu_{>k} * \rho_k$ for some $\rho_k\in \mcal{P}(\R)$.
  Thus, for each $\xi \in \R$, we have
  \begin{equation}\label{eq-4-3}
     |\wh{\mu}_{\sigma^{n_1 + \cdots + n_k}(\eta)}(\xi)| = |\wh{\nu}_{>k}(\xi) \wh{\rho}_k(\xi)| \le |\wh{\nu}_{>k}(\xi)|.
  \end{equation}
  By Corollary \ref{corollary}, the family $\{ \mu_{\sigma^{n_1 + \cdots + n_k}(\eta)} \}_{k=1}^\f$ is equi-positive.
  It follows from Definition \ref{def-equipositive} and (\ref{eq-4-3}) that the family $\{ \nu_{>k} \}_{k=1}^\f$ is also equi-positive.
  Therefore, by Theorem \ref{general-result}, $\mu = \mu_{\omega,\set{n_k}}$ is a spectral measure.
\end{proof}

\section*{Acknowledgements}
The authors thank to Prof. Xing-Gang He and Li-Xiang An for their helpful comments.

Wenxia Li is supported by NSFC No.~12071148,~11971079 and Science and Technology Commission of Shanghai Municipality (STCSM) No.~22DZ2229014.
Jun Jie Miao is supported by Science and Technology Commission of Shanghai Municipality (STCSM)  No.~22DZ2229014.
Zhiqiang Wang is supported by Fundamental Research Funds for the Central Universities No.~YBNLTS2023-016.
The authors would like to thank the referees for his/her many valuable comments and suggestions.


\begin{thebibliography}{99}

\bibitem{An-Fu-Lai-2019}
L.-X. An, X.-Y. Fu, C.-K. Lai,
\newblock On spectral Cantor-Moran measures and a variant of Bourgain's sum of sine problem,
\newblock Adv. Math. 349 (2019), 84–124.

\bibitem{An-He-He-2019}
L.-X. An, L. He, X.-G. He,
\newblock Spectrality and non-spectrality of the Riesz product measures with three elements in digit sets,
\newblock J. Funct. Anal. 277 (2019), no. 1, 255–278.

\bibitem{An-He-2014}
L.-X. An, X.-G. He,
\newblock A class of spectral Moran measures,
\newblock J. Funct. Anal. 266 (2014), no. 1, 343–354.

\bibitem{An-He-Lau-2015}
L.-X. An, X.-G. He, K.-S. Lau,
\newblock Spectrality of a class of infinite convolutions,
\newblock Adv. Math. 283 (2015), 362–376.

\bibitem{An-He-Li-2015}
L.-X. An, X.-G. He, H.-X. Li,
\newblock Spectrality of infinite Bernoulli convolutions,
\newblock J. Funct. Anal. 269 (2015), no. 5, 1571–1590.

\bibitem{An-Wang-2021}
L.-X. An, C. Wang,
\newblock On self-similar spectral measures,
\newblock J. Funct. Anal. 280 (2021), no. 3, Paper No. 108821, 31 pp.

\bibitem{Bil03}
P. Billingsley,
\newblock {\em Convergence of probability measures}.
\newblock John Wiley \& Sons, Inc., Second edition, 1999.


\bibitem{Dai-He-Lau-2014}
X.-R. Dai, X.-G. He, K.-S. Lau,
\newblock On spectral $N$-Bernoulli measures,
\newblock Adv. Math. 259 (2014), 511–531.


\bibitem{Deng-Li-2023}
Q.-R. Deng, M.-T. Li,
\newblock Spectrality of Moran-type Bernoulli convolutions,
\newblock Bull. Malays. Math. Sci. Soc. 46(2023), no.4, Paper No. 136, 19 pp.


\bibitem{Deng-Chen-2021}
Q.-R. Deng, J.-B. Chen,
\newblock Uniformity of spectral self-affine measures,
\newblock Adv. Math. 380 (2021), Paper No. 107568, 17 pp.



\bibitem{Dutkay-Han-Sun-2014}
D. Dutkay, D. Han, Q. Sun,
\newblock Divergence of the mock and scrambled Fourier series on fractal measures,
\newblock Trans. Amer. Math. Soc. 366 (2014), no. 4, 2191-2208.

\bibitem{Dutkay-Haussermann-Lai-2019}
D. Dutkay, J. Haussermann, C.-K. Lai,
\newblock Hadamard triples generate self-affine spectral measures.
\newblock Trans. Amer. Math. Soc. 371 (2019), no. 2, 1439–1481.

\bibitem{Dutkay-Jorgensen-2007}
D. Dutkay, P. Jorgensen,
\newblock Fourier frequencies in affine iterated function systems,
\newblock J. Funct. Anal. 247 (2007), no. 1, 110–137.

\bibitem{Dutkay-Jorgensen-2012}
D. Dutkay, P. Jorgensen,
\newblock Fourier duality for fractal measures with affine scales,
\newblock Math. Comp. 81 (2012), no. 280, 2253-2273.

\bibitem{Dutkay-Lai-2014}
D. Dutkay, C.-K. Lai,
\newblock Uniformity of measures with Fourier frames,
\newblock Adv. Math. 252 (2014), 684–707.

\bibitem{Dutkay-Lai-2017}
D. Dutkay, C.-K. Lai,
\newblock Spectral measures generated by arbitrary and random convolutions,
\newblock J. Math. Pures Appl. (9) 107 (2017), no. 2, 183–204.

\bibitem{Falco03}
K. Falconer.
\newblock {\em Fractal geometry: Mathematical foundations and applications}.
\newblock John Wiley \& Sons, Ltd., Third edition, 2014.

\bibitem{Farkas-Matolcsi-Mora-2006}
B. Farkas, M. Matolcsi, P. Móra,
\newblock On Fuglede's conjecture and the existence of universal spectra,
\newblock J. Fourier Anal. Appl. 12 (2006), no. 5, 483–494.

\bibitem{Farkas-Revesz-2006}
B. Farkas, S. G. Révész,
\newblock Tiles with no spectra in dimension 4,
\newblock Math. Scand. 98 (2006), no. 1, 44–52.

\bibitem{Fu-Wen-2017}
Y.-S. Fu, Z.-X. Wen,
\newblock Spectrality of infinite convolutions with three-element digit sets,
\newblock Monatsh. Math. 183 (2017), no. 3, 465–485.

\bibitem{Fuglede-1974}
B. Fuglede,
\newblock Commuting self-adjoint partial differential operators and a group theoretic problem,
\newblock J. Functional Analysis 16 (1974), 101–121.

\bibitem{He-Tang-Wu-2019}
X.-G. He, M.-W. Tang, Z.-Y. Wu,
\newblock Spectral structure and spectral eigenvalue problems of a class of self-similar spectral measures,
\newblock J. Funct. Anal. 277 (2019), no. 10, 3688–3722.


%\bibitem{Hutchinson}
%J. Hutchinson,
%\newblock Fractals and self-similarity,
%\newblock Indiana Univ. Math. J. 30 (1981), no. 5, 713–747.

\bibitem{Iosevich-Katz-Tao-2003}
A. Iosevich, N. Katz, T.  Tao,
\newblock The Fuglede spectral conjecture holds for convex planar domains,
\newblock Math. Res. Lett. 10 (2003), no. 5-6, 559–569.


\bibitem{Jorgensen-Pedersen-1998}
P. Jorgensen, S. Pedersen,
\newblock Dense analytic subspaces in fractal $L^2$-spaces,
\newblock J. Anal. Math. 75 (1998), 185–228.

\bibitem{Kolountzakis-Matolcsi-2006a}
M. N. Kolountzakis, M. Matolcsi,
\newblock Tiles with no spectra,
\newblock Forum Math. 18 (2006), no. 3, 519–528.

\bibitem{Kolountzakis-Matolcsi-2006b}
M. N. Kolountzakis, M. Matolcsi,
\newblock Complex Hadamard matrices and the spectral set conjecture,
\newblock Collect. Math. 2006, Vol. Extra, 281–291.

\bibitem{Laba-2001}
I. Łaba,
\newblock Fuglede's conjecture for a union of two intervals,
\newblock Proc. Amer. Math. Soc. 129 (2001), no. 10, 2965–2972.

\bibitem{Laba-Wang-2002}
I. Łaba, Y. Wang,
\newblock On spectral Cantor measures,
\newblock J. Funct. Anal. 193 (2002), no. 2, 409–420.

\bibitem{Lev-Matolcsi-2019}
N. Lev, M. Matolcsi,
\newblock The Fuglede conjecture for convex domains is true in all dimensions,
\newblock Acta Math. 228 (2022), no. 2, 385–420.


\bibitem{Lu-Dong-Zhang-2022}
Z.-Y. Lu, X.-H. Dong, P.-F. Zhang,
\newblock Spectrality of some one-dimensional Moran measures,
\newblock J. Fourier Anal. Appl. 28(2022), no.4, Paper No. 63, 22 pp.


\bibitem{Matolcsi-2005}
M. Matolcsi,
\newblock Fuglede's conjecture fails in dimension 4,
\newblock Proc. Amer. Math. Soc. 133 (2005), no. 10, 3021–3026.

\bibitem{Rudin-1987}
W. Rudin,
\newblock \emph{Real and complex analysis}, Third edition.
\newblock McGraw-Hill Book Co., New York, 1987.

\bibitem{Strichartz-2000}
R. Strichartz,
\newblock Mock Fourier series and transforms associated with certain Cantor measures,
\newblock J. Anal. Math. 81 (2000), 209–238.

\bibitem{Strichartz-2006}
R. Strichartz,
\newblock Convergence of mock Fourier series,
\newblock J. Anal. Math. 99 (2006), 333–353.

\bibitem{Tao-2004}
 T. Tao,
\newblock Fuglede's conjecture is false in $5$ and higher dimensions,
\newblock Math. Res. Lett. 11 (2004), no. 2-3, 251–258.


\bibitem{Wang-Dong-Liu-2018}
Z.-Y. Wang, X.-H. Dong, Z.-S. Liu,
\newblock Spectrality of certain Moran measures with three-element digit sets.
\newblock J. Math. Anal. Appl. 459 (2018), no. 2, 743–752.


\end{thebibliography}
\end{document}